\documentclass[12pt,a4paper]{amsart}

\usepackage{geometry}
\usepackage{latexsym,enumerate}
\usepackage{amsmath,amsthm,amsopn,amstext,amscd,amsfonts,amssymb}

\usepackage{stackrel}
\allowdisplaybreaks
\usepackage{xcolor}
\usepackage{xfrac}

\newcommand{\R}{\mathbb{R}}
\newcommand{\N}{\mathbb{N}}

\newcommand{\car}{{\raise0pt\hbox{{\Large $\chi$}}}}
\newcommand{\sg}{{\rm \, sign}}

\newcommand{\Div}{\hbox{\rm div\,}}
\newcommand{\dis }{{\mathcal D}' }
\newcommand{\z }{{\bf z}}

\newenvironment{pf}{\noindent{\sc Proof}.\enspace}{\rule{2mm}{2mm}\medskip}
\newtheorem{Theorem}{Theorem}[section]
\newtheorem{Corollary}[Theorem]{Corollary}
\newtheorem{Definition}[Theorem]{Definition}
\newtheorem{Lemma}[Theorem]{Lemma}
\newtheorem{Proposition}[Theorem]{Proposition}

\newtheorem{Remark}[Theorem]{Remark}

\newcommand{\norma}[2]{\|#1\|_{\lower 4pt \hbox{$\scriptstyle #2$}}}
\newcommand{\ldoble}{\langle\!\langle}
\newcommand{\rdoble}{\rangle\!\rangle}

\newcommand{\intT}{\int_0^T\!\int_\Omega}
\newcommand{\intt}{\int_0^t\!\int_\Omega}
\newcommand{\ints}{\int_s^t\!\int_\Omega}
\newcommand{\intp}{\int_{\partial\Omega}}

\renewcommand{\le}{\leqslant}
\renewcommand{\ge}{\geqslant}

\def\estrella{\buildrel\ast\over\rightharpoonup }
\newcommand{\h}{{\mathcal H}^{N-1}}

\begin{document}

\title[Total Variation Flow with $L^1$-data]{The inhomogeneous Total Variation Flow with $L^1$-data}

\author[M. Latorre and  S. Segura de Le\'on]
{Marta Latorre and  Sergio Segura de Le\'on}

\address{Marta Latorre: Matem\'atica Aplicada, Ciencia e Ingenier\'ia de Materiales y Tecnolog\'ia Electr\'onica, Universidad Rey Juan Carlos
\hfill\break\indent C/Tulip\'an s/n 28933, M\'ostoles, Spain}\email{\tt marta.latorre@urjc.es}

\address{Sergio Segura de Le\'on: Departament d'An\`{a}lisi Matem\`atica,
Universitat de Val\`encia,
\hfill\break\indent Dr. Moliner 50, 46100 Burjassot, Spain}
\email  {\tt sergio.segura@uv.es }

\thanks{}
\keywords{Nonlinear parabolic equations,  $1$-Laplacian operator, $L^1$-initial data, Existence, Uniqueness, Entropy solution
\\
\indent {\it Mathematics Subject Classification: MSC 2020: 35K55, 35K20, 35K67, 35D30, 35A01, 35A02}}

\bigskip
\begin{abstract}
This paper is devoted to the study of the Dirichlet problem for the parabolic equation driven by the $1$--Laplacian operator under minimal integrability assumptions. Specifically, we consider
\begin{equation*}
u'-\Div(Du/|D u|)=f\qquad\text{ in } (0,+\infty)\times\Omega\,,
\end{equation*}
where $\Omega\subset\R^N$ is a bounded open set with Lipschitz boundary, $u_0\in L^1(\Omega)$ is the initial datum, and $f\in L_{loc}^1(0,+\infty; L^1(\Omega))$ is the source term.

We establish the existence and uniqueness of entropy solutions in this low-regularity setting. Our approach relies on an approximation scheme and an entropy formulation adapted to the \mbox{$1$--Laplacian} structure. Additional results include comparison between solutions, further regularity  when data have higher integrability and an analysis of the long-time decay of solutions in the homogeneous case.
\end{abstract}

\maketitle

\section{Introduction}

In this work we prove the existence and uniqueness of an entropy solution to the following Dirichlet problem:
\begin{equation}\label{Pg}
\left\{\begin{array}{ll}
u'(t,x)-\Delta_1 u(t,x)=f(t,x) & \text{ in }(0,+\infty)\times\Omega\,,\\
u(t,x)=0&\text{ on } (0,+\infty)\times\partial\Omega\,,\\
u(0,x)=u_0(x)&\text{ in }\Omega\,,
\end{array}\right.
\end{equation}
where $\Omega$ is a bounded open set in $\R^N$ with $N\ge2$ having a Lipschitz boundary $\partial\Omega$. Here, $u'$ denotes the time derivative of $u$, while $\Delta_1 u = \Div(Du/|Du|)$ is the so-called 1--Laplacian (the symbol $\Div$ stands for the spacial divergence operator).  We consider a source datum $f\in L^1((0,T)\times\Omega)$, for all $T>0$, and an initial datum $u_0\in L^1(\Omega)$.

Whenever dealing with $L^1$-data, the solution cannot be expected to lie in the energy space, only its truncations can be there. In such cases, classical weak formulations of solutions are no longer applicable. Thus, to analyze $L^1$-data, the concepts of entropy and renormalized solutions were introduced. For elliptic equations similar to those driven by the $p$--Laplacian ($p>1$), a general theory was developed by  Bénilan et al. \cite{BBGGPV}, giving meaning to the gradient of the solution from the gradient of its truncations and introducing the entropy formulation in this context. 
This entropy framework was later extended to the parabolic setting. Indeed, in \cite{Pr} and \cite{AMST}  the existence and uniqueness of entropy solutions for parabolic $p$--Laplacian type equations with $L^1$-data were independently established. On the other hand, the renormalized formulation can be found in \cite{DMOP} for the elliptic case and in \cite{BM} for its parabolic adaptation (see also \cite{Po1, Po2} for related results).

One of the main challenges in working with the $1$--Laplacian operator is giving a precise meaning to the quotient $\displaystyle \frac{Du}{|Du|}$. The suitable energy space to dealing with elliptic equations driven by the $1$--Laplacian is the space of functions of bounded variation; it contains those functions whose gradient is just a Radon measure, wherewith the above quotient involves Radon measures. To overcome this difficulty, in \cite{D,ABCM} (see also \cite{ACM}), a bounded vector field $\z$ is introduced to represent the quotient $\displaystyle \frac{Du}{|Du|}$, requiring that $\|\z\|_\infty\le 1$ and the identity  $(\z,Du)=|Du|$ holds, where $(\z,Du)$ denotes the pairing supplied by Anzellotti's theory ($(\z,Du)$ is a Radon measure that generalizes the dot product, see \cite{An}). On the other hand, the boundary condition cannot be satisfied in the sense of traces and so a weak sense must be introduced, which is also based on Anzellotti's theory. Consequently, the condition  $(\z,Du)=|Du|$ must be replaced by  $(\z,DT_ku)=|DT_ku|$ for all $k>0$ in the $L^1$-framework.

These requirements were transferred to the parabolic setting in \cite{ABCM} for the homogeneous case (see, for instance, \cite{MST} for the corresponding elliptic problem). Indeed, applying the Crandall-Liggett semigroup generation theorem, the existence of an entropy solution is proven in \cite{ABCM}. By using the technique of doubling of variables, uniqueness is obtained as well. Semigroup theory also allows handle the inhomogeneous case, when $u_0\in L^2(\Omega)$ and $f\in L^2((0,T)\times \Omega)$ (see \cite{T}).  A different approach was studied in \cite{BDM2}, where solutions are parabolic minimizers of a certain related functional. Existence is achieved by passing to the limit in a sequence of minimizers of approximating strictly convex functionals. Uniqueness also holds for every convex functional. It is worth noting that both approaches yield to the same solution, at least for a regular initial datum (see \cite{KS}).

Yet another way to address the non-homogeneous problem with initial datum $u_0\in L^2(\Omega)$ was considered in \cite{SW} and \cite{LS} using an approximation procedure. In \cite{SW}, existence and uniqueness with datum $f\in L^2((0,T)\times\Omega) $ (the same setting as \cite{T}) is obtained, while the extension to $f\in L^1(0,T;L^2(\Omega))$ contained a flaw. This deficiency was corrected in the second paper through \cite[Proposition 3.8]{LS}. We point out that the procedure used, however, does not ensure  the identity  $\int_\Omega(\z(t),Du(t))=\int_\Omega|Du(t)|$ for almost all $t\in [0,T]$. It is just shown that this identity holds  {\em in mean} (see \cite[Lemma A.6 and Remark A.7]{LS}). Furthermore, although the definition of solution employed is closely related to that in \cite{ABCM},  different formulations are compared in \cite[Remark 3.7]{LS}. The last one is analogous to the weak solution of parabolic problems driven by the $p$--Laplacian (for a detailed proof that the solution found in \cite{LS} satisfies this last formulation, we refer to Proposition \ref{test-1} below). We stress that, using it, it is very easy to prove uniqueness of solutions.

In this paper, the problem we address with $L^1$-data is
\begin{equation}\label{P}
\left\{\begin{array}{ll}
u'(t,x)-\Delta_1 u(t,x)=f(t,x) & \text{ in }(0,T)\times\Omega\,,\\
u(t,x)=0&\text{ on } (0,T)\times\partial\Omega\,,\\
u(0,x)=u_0(x)&\text{ in }\Omega\,,
\end{array}\right.
\end{equation}
for every $T>0$.
Our aim is to extend the notions and results of \cite{LS} to this context. This extension is technically demanding because standard energy estimates  fail in the $L^1$-setting  (see, for instance, \cite{PPP} for a general discussion on parabolic equations driven by the $p$--Laplacian, $p>1$, and  measure data).  More precisely, we introduce a formulation of entropy solution which is similar to that considered in  \cite{Pr} or \cite{AMST} to handle $p$--Laplacian type equations, and conceptually aligned with the entropy conditions for flux-limited diffusion equations \cite{C}. Consequently, the core of our approach relies on constructing a suitable family of test functions inherently coupled with their associated vector fields, which  are provided by Proposition \ref{test-func0} below, having in mind Definition \ref{def-reg}. 

Equipped with this setting up, uniqueness can be easily proven without having to doubling variables. Regarding existence, we first extend the key result \cite[Proposition 3.8]{LS} to our framework (see Proposition 4.1 below) and get an entropy solution by constructing a suitable sequence of approximating problems. It is worth noting that the solution we found in our $L^1$-environment almost satisfies the identity $\int_\Omega (\z(t),DT_ku(t)) = \int_\Omega |DT_ku(t)|$, for almost all $t\in [0,T]$. However, we can only see it in a {\it mean sense} (see Remark \ref{igual-r}) as it occurs in the regular case. 

Besides existence and uniqueness, we establish a comparison principle and show that if the data $f$ and $u_0$ possess higher integrability --specifically, if $f(t)$ (for almost all $t>0$) and $u_0$ belong to $L^r(\Omega)$ with $1<r<2$--, then the entropy solution inherits this regularity, i.e. for almost every $t>0$ the function $u(t)$ belongs to $L^r(\Omega)$. Moreover, we  analyze the long-time behavior of solutions in the homogeneous case by showing the decay of the solution for large $t$.

The structure of the paper is as follows. In Section 2  we introduce the notation and preliminary results required to make the work self-contained. Section 3 gathers results for problems with regular data, which serve as the foundation for the general case. In Section 4 we present a key auxiliary result, while Sections 5 and 6 are devoted to proving the existence and uniqueness of entropy solutions, respectively. In Section 7 we establish a comparison result and analyze how the distance between initial data influences the distance between the corresponding solutions. Section 8 is concerned with improved regularity: we show that if the data possess higher integrability, the entropy solution reflects this fact. Finally, in Section 9, we analyze the long-time behavior of solutions to the homogeneous equation and establish a decay result when $t$ tends to $\infty$.

\section{Preliminaries}

This section collects the notation and auxiliary results used throughout the paper.

\subsection*{Notation}

Let $T>0$ be fixed and let $\Omega \subset \mathbb{R}^N$ (with $N\ge 2$) be a bounded open set with Lipschitz boundary $\partial \Omega$. We denote by  $\nu$ the outward unit normal vector to $\partial\Omega$, defined \mbox{$\mathcal{H}^{N-1}$-almost} everywhere, where $\mathcal{H}^{N-1}$ stands for the $(N-1)$-dimensional Hausdorff measure.  On the other hand, the Lebesgue measure of a measurable subset $E\subset \Omega$ is denoted by $|E|$. 

We adopt standard notation for function spaces, denoting by $L^p(\Omega)$ and $W^{1,p}(\Omega)$  the Lebesgue and Sobolev spaces, respectively. 

Given a Banach space $X$, the space $L^p(0,T;X)$ consists of strongly measurable, $p$-summable $X$--valued functions $v\colon[0,T]\to X$. The space $L_w^1(0,T;X)$ consists of weakly measurable functions, i.e.,  those  $v\in L_w^1(0,T;X)$ such that the map $t\mapsto \langle v(t),\phi\rangle$ is measurable for every $\phi\in X^*$. We recall that $L^1(0,T;X)\subset L_w^1(0,T;X)$. 

When $X$ is a function space, we  identify $v(t)\in X$ with $v(t)(x)=v(t, x)$. For further details on Banach space--valued functions, we refer to \cite[Appendix E]{Ev} and \cite{DU}.

Vector-valued Sobolev spaces are denoted by $W^{1,p}(0,T; X)$, or by $H^1(0,T;X)$ when $p=2$. Their definition and main properties can be found in \cite[Appendix]{Br}. We emphasize that these spaces satisfy the continuous embedding $W^{1,p}(0,T; X)\subset C([0,T]; X)$.

A weakly measurable function $f\colon[0,T]\to X$ is called Dunford integrable if  $\langle x^*, f(t)\rangle \in L^1(0,T)$  for every $x^*\in X^*$. If, in addition, for every measurable set $E\subset (0,T)$,  the Dunford integral $\int_E\langle x^*,f(t)\rangle\,dt$ defines an element of $X$, then $f$ is said to be Pettis integrable.

As indicated previously,  the symbol $u'$ represents the time derivative of function $u$ in the sense of distributions. The precise definition of $u'$ used in this work is given in Definition \ref{def_deriv_temporal}.

Finally, we consider the truncation operator $T_k\colon\R \to \R$, defined by
\begin{equation*}
T_k(s) = \min\{|s|,k\} \, \sg(s), \quad \text{for all } s \in \R\,,
\end{equation*}
as well as function $G_k\colon\R \to \R$
\begin{equation}\label{def_Gk}
G_k(s)=s-T_k(s)\,, \quad \text{for all } s \in \R\,.
\end{equation}
A related auxiliary function is the primitive of $T_k$, given by
\begin{equation}\label{def_Jk}
J_k(s)=\int_0^s T_k(\sigma)\,d\sigma\,, \quad \text{for all } s \in \R\,.
\end{equation}

\subsection*{Functions of Bounded Variation}

The space of functions of bounded variation, denoted by $BV(\Omega)$, constitutes the natural framework for elliptic equations involving the 1--Laplacian. A function $v\in L^1(\Omega)$ belongs to $BV(\Omega)$ if its distributional gradient $Dv$ is a Radon measure with finite total variation. We denote the total variation of $Dv$ over $\Omega$  by $\int_\Omega |Dv|$. Accordingly,  the integral of  a function $\varphi$ with respect to the measure $|Dv|$ is denoted by $\int_\Omega \varphi |Dv|$.

Recall that every $v\in BV(\Omega)$ admits a trace on the boundary satisfying $v\big|_{\partial\Omega}\in L^1(\partial \Omega)$. 

The space $BV(\Omega)$ becomes a Banach space when equipped with the norm
\begin{equation*}
\|v\|=\int_\Omega |Dv|+\intp |v|\,d\h\,.
\end{equation*}
We remark that this norm defines a functional in $BV (\Omega)$ that is lower semi-continuous with respect to the $L^1(\Omega)$-convergence.

It is worth noting that a chain rule holds for functions of bounded variation. If $u\in BV(\Omega)$ and $\varphi\colon\R\to\R$ is a Lipschitz function with a Lipschitz constant $L$, then $\varphi(u)\in BV(\Omega)$ and the inequality $|D\varphi(u)|\le L |Du|$ holds in the sense of measures (see \cite[Theorem 3.99]{AFP} and the proof of \cite[Theorem 3.96]{AFP}).

For a comprehensive study of functions of bounded variation, we refer the reader to the monographs \cite{AFP,ABM}.

We frequently use the space $BV(\Omega) \cap L^\infty(\Omega)$, equipped with the norm
\begin{equation*}
\|v\| = \max\{ \|v\|_{BV}, \|v\|_{\infty} \}\,.
\end{equation*}
This is  a   Banach space, and we denote its duality pairing with its dual $(BV(\Omega)\cap L^\infty(\Omega))^*$ and the space itself by
\begin{equation*}
\langle \zeta, v \rangle_\Omega, \quad \zeta \in (BV(\Omega)\cap L^\infty(\Omega))^*,\; v \in BV(\Omega)\cap L^\infty(\Omega)\,.
\end{equation*}
Finally, given $\xi = \zeta + f$ with $\zeta \in (BV(\Omega)\cap L^\infty(\Omega))^*$ and $f \in L^1(\Omega)$, we define the pairing
\begin{equation*}
\ldoble \xi, v \rdoble_\Omega := \langle \zeta, v \rangle_\Omega + \int_\Omega f v \, dx\qquad\text{ for all }\; v\in BV(\Omega)\cap L^\infty(\Omega)\,.
\end{equation*}

\subsection*{Anzellotti’s Theory}

To rigorously define the quotient $Du/|Du|$ and address the difficulties arising where the gradient  $Du$ vanishes in a set of nonzero measure, \cite{ABCM} introduced a vector field $\z\in L^\infty((0,T)\times\Omega;\R^N)$ to represent this term. In order to handle such vector fields, it is required a suitable version of Green's formula. We rely on the theory of $L^\infty$--divergence--measure vector fields, originally developed by Anzellotti (see \cite{An}) and extended by Chen and Frid (\cite{CF}). 

Given $\z\in L^\infty(\Omega;\R^N)$  such that $\Div\z \in L^1(\Omega)$ and  $v\in BV(\Omega)\cap L^\infty(\Omega)$, we define the pairing $(\z,Dv)$ by
\begin{equation*}
\langle\,(\z,Dv)\,,\varphi\,\rangle=-\int_\Omega v\,\varphi \,\Div\z\,dx-\int_\Omega v\,\z\cdot\nabla\varphi\,dx\,,\qquad\forall \varphi\in C_0^\infty(\Omega)\,.
\end{equation*}
This distribution $(\z,Dv)$ defines  a Radon measure satisfying the inequality $|(\z,Dv)|\le\|\z\|_\infty |Dv|$ in the sense of measures. We point out that this pairing remains well-defined under weaker regularity assumptions, for instance, when $\z\in L^\infty(\Omega;\R^N)$ with $\Div\z \in L^2(\Omega)$ and $v\in BV(\Omega)\cap L^2(\Omega)$.

Moreover, the following Green's formula holds
\begin{equation*}
\int_\Omega v\,\Div\z\,dx + \int_\Omega(\z,Dv) = \intp v\,[\z,\nu]\,d\h\,,
\end{equation*}
where $[\z,\nu]\in L^\infty(\Omega)$ stands for the weak trace on the boundary $\partial\Omega$ of the normal component of the vector field, also introduced by Anzellotti in \cite{An}.

A crucial step in our existence proof involves extending these results to cases where the integrability of $\Div\z \in L^1(\Omega)$ is a priori unknown.

A final remark regarding truncations is in order. Assume that $v\in BV(\Omega)\cap L^\infty(\Omega)$ and $\z$ is a $L^\infty$--divergence--measure vector field such that $\|\z\|_\infty\le1$. If the identity $(\z, Dv)=|Dv|$ holds in the sense of measures, it implies $(\z, DT_k(v))=|DT_k(v)|$ for every $k>0$. To see this claim, it is enough to fix $k>0$, split both measures: $|Dv|=|DT_k(v)|+|DG_k(v)|$ and $(\z, Dv)=(\z, DT_k(v))+(\z, DG_k(v))$, and write
\[
0=|Dv|-(\z, Dv)=\big[|DT_k(v)|-(\z, DT_k(v))\big]+\big[|DG_k(v)|-(\z, DG_k(v))\big]\ge0
\]
owing to the non-negativity of both bracketed terms. It is now straightforward that
$(\z, DT_k(v))=|DT_k(v)|$ and $(\z, DG_k(v))=|DG_k(v)|$ holds.

\subsection*{Time Derivative}

We now introduce the notion of time derivative employed throughout this paper.

\begin{Definition}\label{defi 1}
Let  $\Psi \in L^1(0,T; BV(\Omega))\cap L^{\infty}(0,T;
L^2(\Omega))$.

We say that $\Psi$ admits a \textbf{weak derivative} in $L_{w}^{1}(0,T;
BV(\Omega))\cap L^{1}(0,T; L^2(\Omega))$ if there exists
$\Theta\in L_{w}^{1}(0,T; BV(\Omega))\cap L^{1}(0,T; L^2(\Omega))$ such
that $\Psi(t) =\int_{0}^{t}\Theta (s) ds$, where the integral is
taken as a Pettis integral.
\end{Definition}

\begin{Definition}\label{def_deriv_temporal}
We say that a function $\xi\in L^1(0,T;BV(\Omega)\cap L^\infty(\Omega))^*+L^1((0,T)\times\Omega)$ is the \textbf{time derivative} of $u\in C([0,T];L^1(\Omega))\cap L^\infty(0,T;L^1(\Omega))$ if
\begin{equation*}
\int_0^T\ldoble \, \xi(t),\Psi(t)\,\rdoble_\Omega\,dt = -\intT u(t)\,\Phi(t)\,dx\,dt
\end{equation*}
for all $\Psi\in L^1(0,T;BV(\Omega)\cap L^\infty(\Omega))\cap L^\infty((0,T)\times\Omega)$ with compact support in $(0,T)$ and whose weak time derivative is $\Phi \in L^1_w(0,T;BV(\Omega))\cap L^1(0,T;L^\infty(\Omega))$.
\end{Definition}

\section{Regular data framework}

A key contribution of this work lies in the use of test functions equipped with an associated vector field. This approach allows the use of approximate solutions as test functions. To this end, for a fixed $T>0$, we first consider problem \eqref{P}  with regular  data $f\in L^2((0,T)\times \Omega)$ and $u_0\in W^{1,1}(\Omega)\cap L^2(\Omega)$. In this setting, the suitable concept of solution is defined as follows.

\begin{Definition}\label{def-reg}
Assume that $f\in L^2((0,T)\times \Omega)$ and $u_0\in W^{1,1}(\Omega)\cap L^2(\Omega)$.
A function $u \in L^1_w(0, T; BV(\Omega))\cap H^1(0,T;L^2(\Omega))$ (which implies  $u\in C([0,T];L^2(\Omega))$) is called a \textbf{weak solution} of \eqref{P} if $u(0) = u_0$ and there exists  a vector field $\z \in L^\infty((0,T)\times \Omega; \R^N)$ with $\|\z \|_\infty\le 1$ satisfying
\begin{enumerate}
\item[(i)]  $u'(t)=\Div \z(t) +f(t)$  in $\Omega$  in the sense of distributions,
\item[(ii)] $(\z(t) , Du(t))=|Du(t)|$ as measures in $\Omega$,
\item[(iii)] $[\z(t), \nu]\in \sg(-u(t))$ $\mathcal H^{N-1}$--a. e. on $\partial\Omega$,
\end{enumerate}
for almost all $t\in[0,T]$.

Furthermore, assuming  $f\in L^2((0,T)\times \Omega)$ for all $T>0$ and $u_0\in W^{1,1}(\Omega)\cap L^2(\Omega)$, we say that  $u\in L^1_w(0, +\infty; BV(\Omega))\cap C([0,+\infty);L^2(\Omega))$ is a global weak solution of \eqref{Pg} if it is a weak solution to \eqref{P} for every $T>0$.
\end{Definition}

\begin{Remark}\label{test}\rm
Condition (i) in Definition \ref{def-reg} implies that  any $v\in BV(\Omega)\cap L^2(\Omega)$ can be used as a test function. As a consequence, the solution itself or a truncation of it are admissible test functions. 
\end{Remark}

\begin{Remark}\rm
Since $\Div \z(t)\in L^2(\Omega)$ for almost every $t\in [0,T]$,  Anzellotti's theory applies. Then, conditions (i)--(iii) of the above definition can be combined to obtain\begin{multline}\label{simple}
\int_\Omega u'(t)\big(u(t)-\phi(t)\big)\, dx+\int_\Omega|Du(t)|+\int_{\partial\Omega}|u(t)|\, d\mathcal H^{N-1}
\\
=\int_\Omega(\z(t), D\phi(t))
-\int_{\partial\Omega}\phi(t)[\z(t), \nu]\, d\mathcal H^{N-1}+\int_\Omega f(t)\big(u(t)-\phi(t)\big)\, dx
\end{multline}
for all $\phi\in L^1_w(0, T; BV(\Omega))\cap C([0,T];L^2(\Omega))$ and almost all $t\in [0,T]$.
\end{Remark}

\begin{Proposition}\label{test-func}
For every $f\in L^2((0,T)\times \Omega)$ and every $u_0\in W^{1,1}(\Omega) \cap L^2(\Omega)$,  problem \eqref{P} admits a unique solution in the sense of Definition \ref{def-reg}.
\end{Proposition}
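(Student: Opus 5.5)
The plan is to treat existence and uniqueness separately. Existence is essentially already available in the literature through semigroup theory (\cite{ABCM}, \cite{T}) or the approximation scheme of \cite{SW,LS}. Uniqueness is elementary once \eqref{simple} is at hand.

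\emph{Existence.} We approximate by $p$--Laplacian problems, $p>1$, following \cite{SW,LS}:
\[
u_p'-\Div(|\nabla u_p|^{p-2}\nabla u_p)=f \text{ in }(0,T)\times\Omega, \qquad u_p=0 \text{ on the lateral boundary}, \qquad u_p(0)=u_{0}.
\]
(If $u_0\notin W_0^{1,p}(\Omega)$, we replace it by a smooth $W_0^{1,p}$ approximation $u_{0,p}\to u_0$ in $L^2(\Omega)$ with $\int_\Omega|\nabla u_{0,p}|^p$ bounded; the hypothesis $u_0\in W^{1,1}(\Omega)$ is exactly what makes this possible.)

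1. Testing with $u_p$ and with $u_p'$, and using Young's inequality on $\int f u_p'$, gives bounds independent of $p$ for $u_p'$ in $L^2((0,T)\times\Omega)$, for $\sup_t\int_\Omega|\nabla u_p(t)|^p/p$, and hence for $u_p$ in $L^\infty(0,T;W^{1,1}(\Omega))$.
2. By Aubin--Lions, $u_p\to u$ strongly in $C([0,T];L^2(\Omega))$ and $u_p'\rightharpoonup u'$ weakly in $L^2$. Moreover $|\nabla u_p|^{p-2}\nabla u_p\rightharpoonup \z$ weakly in every $L^q$, and a standard argument gives $\|\z\|_\infty\le1$. Passing to the limit in the equation yields (i), with $\Div\z=u'-f\in L^2$.
3. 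For (ii)--(iii), we test the approximating equation with $u_p$ over $(0,t)$, use lower semicontinuity of $\int_\Omega|Du|+\int_{\partial\Omega}|u|$ together with $\frac1p|\xi|^p\ge|\xi|-\frac{p-1}{p}$, and compare with Green's formula applied to the limit. This gives
\[
\int_0^T\Big(\int_\Omega|Du|+\int_{\partial\Omega}|u|\Big)\le \int_0^T\Big(\int_\Omega(\z,Du)-\int_{\partial\Omega}u[\z,\nu]\Big).
\]
The reverse pointwise inequalities $(\z,Du)\le|Du|$ and $-u[\z,\nu]\le|u|$ then force equality for a.e.\ $t$, and from this (ii) and (iii) follow. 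Local versions are obtained by testing with $u_p\varphi$.

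\emph{Main obstacle.} The main obstacle is this identification step. The energy inequality only holds integrated in time, and the pairing must be localized in $x$ and made valid for a.e.\ $t$. I would use the weak-in-time measurability $u\in L^1_w(0,T;BV(\Omega))$ and the fact that $\Div\z(t)\in L^2$ for a.e.\ $t$ to apply Anzellotti's theory slice by slice. Should this become too technical, I would fall back on citing \cite{T}: there Crandall--Liggett theory gives a strong solution for $f\in L^2$ and $u_0$ in the domain closure, and the argument of \cite{SW} shows that this solution satisfies (i)--(iii).

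\emph{Uniqueness.} Let $u_1,u_2$ be two solutions with fields $\z_1,\z_2$. We apply \eqref{simple} for $u_1$ with $\phi=u_2$ and for $u_2$ with $\phi=u_1$, and add the two identities. Using $(\z_i,Du_j)\le|Du_j|$ and $-u_j[\z_i,\nu]\le|u_j|$, which hold since $\|\z_i\|_\infty\le1$, the gradient and boundary terms cancel with the correct sign and we obtain
\[
\int_\Omega (u_1-u_2)'(u_1-u_2)\,dx\le 0 .
\]
Since $u_i\in H^1(0,T;L^2(\Omega))$, this gives $\frac{d}{dt}\|u_1(t)-u_2(t)\|_2^2\le0$, and with $u_1(0)=u_2(0)$ we conclude $u_1=u_2$.
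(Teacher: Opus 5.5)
Your proposal follows the paper's route. The paper defers existence to the $p$--Laplacian approximation of \cite[Theorem 4.1]{SW} (with \cite{BDM2,KS} as an alternative) and proves uniqueness exactly as you do: add \eqref{simple} for $u_1$ with $\phi=u_2$ and for $u_2$ with $\phi=u_1$, then apply the chain rule for $\frac12\|u_1(t)-u_2(t)\|_2^2$ from \cite[Proposition 3.8]{LS}. One minor correction: for $N\ge2$, bounded sets of $W^{1,1}(\Omega)\cap L^2(\Omega)$ are not relatively compact in $L^2(\Omega)$, so Aubin--Lions only gives strong convergence in $C([0,T];L^q(\Omega))$ for $q<2$, not in $C([0,T];L^2(\Omega))$. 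This does not affect your argument, because step 3 only needs weak $L^2$ convergence, via $\liminf_p\|u_p(t)\|_2\ge\|u(t)\|_2$ and $\int_0^t\!\int_\Omega f u_p\to\int_0^t\!\int_\Omega f u$.
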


Proposition \ref{test-func} can be proven as in \cite[Theorem 4.1]{SW} where the solution is found as a limit of solutions to parabolic problems involving the $p$--Laplacian. Alternatively, the result can be derived  from the arguments in \cite[Theorem 1.6]{BDM2}, having in mind \cite[Theorem 5.1]{KS}. Observe that uniqueness easily follows from applying identity \eqref{simple}  to two distinct solutions, taking on account  \cite[Proposition 3.8]{LS}.

The main advantage of Proposition \ref{test-func} is that it provides a large amount of functions, each  equipped with an associated vector field, suitable for use as test functions when more general data are considered.

Moreover, since  the existence and uniqueness of solutions for $f \in L^1(0, T; L^2(\Omega))$ and $u_0 \in L^2(\Omega)$ were established in \cite[Theorems 4.1 and 5.1]{LS}, the following result shows that these solutions satisfy a formulation in which functions with an associated vector field can, indeed, be employed as test functions. This fact was announced in \cite[Remark 3.7]{LS}. The main goal of this paper is to extend that result to the $L^1$-setting.

\begin{Proposition}\label{test-1}
Assume that $f\in L^1(0,T; L^2(\Omega))$ and $u_0\in L^2(\Omega)\cap W^{1,1}(\Omega)$.
Then, the unique solution $u\in L^1_w(0,T;BV(\Omega))\cap C([0,T];L^2(\Omega))$ to problem \eqref{P} satisfies, for almost all $t\in [0,T]$,
\begin{multline}\label{fund}
\frac12 \left(\int_\Omega (u(t)-\phi(t))^2dx\right)'+\int_\Omega \phi'(t)(u(t)-\phi(t))\, dx\\
+\int_\Omega \big(\z_\phi(t), D(u(t)-\phi(t))\big)-\int_{\partial\Omega}(u(t)-\phi(t)) [\z_\phi(t), \nu]\, d\mathcal H^{N-1}\\
\le  \int_\Omega f(t)(u(t)-\phi(t))\, dx
\end{multline}
for every $\phi\in L_w^1(0,T; BV(\Omega))\cap H^{1}(0,T; L^2(\Omega))$ which has an associated vector field $\z_\phi\in L^\infty((0,T)\times\Omega; \R^N)$ such that
\begin{enumerate}
\item[(a)] $\|\z_\phi\|_\infty\le 1$,
\item[(b)] $\Div \z_\phi\in L^2((0,T)\times\Omega)$,
\item[(c)] $(\z_\phi(t), D\phi(t))=|D\phi(t)|$ for almost all $t$,
\item[(d)] $[\z_\phi(t), \nu]\in \sg(-\phi(t))$ for almost all $t$.
\end{enumerate}
\end{Proposition}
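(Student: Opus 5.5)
Since $f\in L^1(0,T;L^2(\Omega))$ and $u_0\in L^2(\Omega)$, the solution $u$ given by \cite[Theorems 4.1 and 5.1]{LS} has an associated field $\z$ with $\|\z\|_\infty\le1$. It satisfies $u'(t)=\Div\z(t)+f(t)$ in $\mathcal D'(\Omega)$, with $u'\in L^1(0,T;L^2(\Omega))$. It satisfies $[\z(t),\nu]\in\sg(-u(t))$. However, the identity $(\z(t),Du(t))=|Du(t)|$ is only known \emph{in mean}: by \cite[Proposition 3.8]{LS} and \cite[Lemma A.6]{LS}, $\int_0^t\!\int_\Omega(\z,Du)=\int_0^t\!\int_\Omega|Du|$, and the same holds for the boundary terms. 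So the plan is to prove \eqref{fund} first in an integrated form on $(s,t)$, and then to recover the pointwise a.e. statement by Lebesgue differentiation.

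\textbf{Step 1: the identity for $u-\phi$.} Take $w=u-\phi$ as a test function in the equation for $u$; this is allowed since $w(t)\in BV(\Omega)\cap L^2(\Omega)$ and $\Div\z(t)\in L^2(\Omega)$ a.e. Green's formula gives
\[
\int_\Omega u'(t)w(t)\,dx+\int_\Omega(\z(t),Dw(t))-\int_{\partial\Omega}w(t)[\z(t),\nu]\,d\h=\int_\Omega f(t)w(t)\,dx .
\]
Since $u-\phi\in H^1(0,T;L^2(\Omega))$, we have $\int_\Omega u'w=\frac12\big(\int_\Omega w^2\big)'+\int_\Omega\phi'w$.

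\textbf{Step 2: replacing $\z$ by $\z_\phi$.} It remains to show
\[
\int_\Omega(\z-\z_\phi,Dw)-\int_{\partial\Omega}w[\z-\z_\phi,\nu]\,d\h\ge0 .
\]
Here $(\z-\z_\phi,Dw)$ makes sense because of (b). Expand into four pieces:
\begin{itemize}
\item $(\z,Du)$ together with $-u[\z,\nu]$, which equals $|Du|+|u|$ on $\partial\Omega$;
\item $-(\z,D\phi)$ together with $\phi[\z,\nu]$, which is $\ge-|D\phi|-|\phi|$ since $|[\z,\nu]|\le1$;
\item $-(\z_\phi,Du)$ together with $u[\z_\phi,\nu]$, which is $\ge-|Du|-|u|$;
\item $(\z_\phi,D\phi)$ together with $-\phi[\z_\phi,\nu]$, which equals $|D\phi|+|\phi|$ by (c) and (d).
\end{itemize}
The sum is $\ge0$ provided the first piece is an identity. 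Pointwise in $t$ that identity is the missing information. It is available after integration over $(s,t)$ via the mean version from \cite{LS}, and the boundary equality $-u[\z,\nu]=|u|$ holds a.e. $t$ by (iii).

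\textbf{Step 3: integrate and differentiate.} Integrating Step 1 over $(s,t)$ and using Step 2 in integrated form yields \eqref{fund} integrated on $(s,t)$ for all $0\le s<t\le T$. Then $\frac12\int_\Omega w^2$ is absolutely continuous, and all other terms are $L^1(0,T)$ functions of time; this uses $\Div\z_\phi\in L^2$, $\phi'\in L^2$, and $f\in L^1(L^2)$ paired with $w\in C([0,T];L^2)$. Lebesgue's differentiation theorem then gives \eqref{fund} for a.e. $t$.

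\textbf{Main obstacle.} The delicate point is Step 2: the mean identity must hold on every interval $(s,t)$, not just $(0,T)$. I would obtain this from the energy equality $\frac12\|u(t)\|_2^2-\frac12\|u(s)\|_2^2+\int_s^t\big(\int_\Omega|Du|+\int_{\partial\Omega}|u|\big)=\int_s^t\!\int_\Omega fu$ given in \cite{LS}. Compare it with the same computation done with $\z$, obtained by testing with $u$. The difference is a nonnegative integrand, $|Du|-(\z,Du)$ plus $|u|+u[\z,\nu]$, whose integral over $(0,T)$ vanishes. Hence the integrand vanishes for a.e. $t$, which in fact gives the pointwise identity and simplifies Step 3. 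If the energy equality is available only as an inequality, the integrated form on arbitrary $(s,t)$ still suffices.
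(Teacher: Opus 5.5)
\textbf{There is a genuine gap:} the proposal works directly with the limit solution $u$ and its field $\z$, and the regularity this needs is not available. For $f\in L^1(0,T;L^2(\Omega))$, the solution of \cite{LS} is only known to lie in $L^1_w(0,T;BV(\Omega))\cap C([0,T];L^2(\Omega))$. Its time derivative $\Div\z+f$ is a weak$^*$ limit living in a dual space of type $L^1(0,T;BV(\Omega)\cap L^\infty(\Omega))^*$ plus an integrable part.

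That framework does not give you $u'\in L^1(0,T;L^2(\Omega))$, let alone $u-\phi\in H^1(0,T;L^2(\Omega))$. Nor does it give $\Div\z(t)\in L^2(\Omega)$ for a.e. $t$. Consequently the following are all unjustified:
\begin{itemize}
\item the pointwise-in-time test with $u(t)-\phi(t)$ and Green's formula in Step 1;
\item the chain rule $\int_\Omega u'w=\frac12(\int_\Omega w^2)'+\int_\Omega\phi'w$;
\item the ``same computation done with $\z$, obtained by testing with $u$'' in your last paragraph.
\end{itemize}
Likewise, the ``mean'' identity of \cite{LS} is not the same-time identity $\int_s^t\!\int_\Omega(\z,Du)=\int_s^t\!\int_\Omega|Du|$ that you invoke in Step 2. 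It is a Steklov-type statement coupling two different times, of the form displayed in Remark \ref{igual-r}: $\lim_{\delta\to0}\int_0^T\frac1\delta\int_{t-\delta}^t\big|\eta(s)\int_\Omega(\z(t),Du(s))-\eta(t)\int_\Omega|Du(t)|\big|\,ds\,dt=0$. A warning sign is that your final paragraph would produce $(\z(t),Du(t))=|Du(t)|$ for a.e. $t$. The paper explicitly says this does not follow from the \cite{LS} procedure (see the introduction and Remark \ref{igual-r}).

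\textbf{The missing idea is to never touch the limit field $\z$.} Approximate $f$ by $f_n\in L^2((0,T)\times\Omega)$ with $f_n\to f$ in $L^1(0,T;L^2(\Omega))$. By Proposition \ref{test-func}, the corresponding solutions $u_n\in H^1(0,T;L^2(\Omega))$ have fields $\z_n$ with $\Div\z_n(t)\in L^2(\Omega)$ and $(\z_n(t),Du_n(t))=|Du_n(t)|$ for a.e. $t$. For these, your Steps 1--2 (with $\z_n$ in place of $\z$) are legitimate pointwise in $t$, and your four-piece comparison with $\z_\phi$ yields \eqref{fund} for $u_n$ and $f_n$.

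Then integrate over $(s,t)$ and use Green's formula for $\z_\phi$ to rewrite the flux terms as $-\int_s^t\!\int_\Omega\Div\z_\phi\,(u_n-\phi)$. Now no trace of $\z_n$ or $Du_n$ remains. Every term passes to the limit using:
\begin{itemize}
\item $u_n\to u$ in $L^\infty(0,T;L^2(\Omega))$ (\cite[Theorem 4.1]{LS});
\item $f_n\to f$ in $L^1(0,T;L^2(\Omega))$;
\item $\Div\z_\phi,\phi'\in L^2((0,T)\times\Omega)$.
\end{itemize}
Differentiating the resulting integrated inequality in $t$ and applying Green's formula for $\z_\phi$ once more gives \eqref{fund} for $u$.
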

\begin{pf}
Since $f\in L^1(0,T;L^2(\Omega))$, there exists a sequence $\{ f_n\}\subset L^2((0,T)\times\Omega)$ such that $f_n\to f$ in $L^1(0,T;L^2(\Omega))$. 
 Now, consider the approximating problems
\begin{equation}\label{Pn}
\left\{\begin{array}{rcll}
\displaystyle u_n^\prime  -  \Delta_1 u_n & =& f_n(t,x)\,&\hbox{in }(0,T)\times\Omega\,,\\[2mm]
u_n &= &0\,  & \hbox{on } (0,T)\times\partial\Omega \,,\\[2mm]
u_n(0,x)&=&u_{0}(x)& \hbox{in }\Omega\,,
\end{array}\right.
\end{equation}
which have a solution $u_n\in L_w^1(0,T;BV(\Omega))\cap H^1(0,T;L^2(\Omega))$ thanks to Proposition \ref{test-func}. Moreover, it provides a vector field $\z_n\in L^\infty((0,T)\times\Omega;\R^N)$ such that $\|\z_n\|_\infty\le 1$ and 
\begin{enumerate}
\item $u_n^\prime(t) =\Div\z_n(t)+f_n(t)$ in $\dis(\Omega)$,
\item $(\z_n(t),Du_n(t)) = |Du_n(t)|$ as measures in $\Omega$,
\item $[\z_n(t),\nu]\in \sg(-u_n(t))$ $\mathcal H^{N-1}$--a. e. on $\partial\Omega$,
\end{enumerate}
holds for almost every $t\in (0,T)$.

We first claim that inequality \eqref{fund} holds for each $u_n$. To this end, choose $\phi\in L_w^1(0,T; BV(\Omega))\cap H^{1}(0,T; L^2(\Omega))$ which has an associated vector field $\z_\phi\in L^\infty((0,T)\times\Omega; \R^N)$ satisfying conditions (a)--(d) of the statement. We apply \cite[Proposition 3.8]{LS} to get
\begin{equation*}
\frac12\left(\int_\Omega (u_n(t)-\phi(t))^2dx\right)^\prime =\int_\Omega(u_n(t)-\phi(t))(u_n(t)-\phi(t))'\, dx
\end{equation*}
for almost every $t\in(0,T)$. Thus,
\begin{multline}\label{ec:01}
\frac12\left(\int_\Omega (u_n(t)-\phi(t))^2dx\right)^\prime+\int_\Omega \phi'(t)(u_n(t)-\phi(t))\, dx=
\int_\Omega u_n'(t)(u_n(t)-\phi(t))\, dx\\
=\int_\Omega \Div\z_n(t)(u_n(t)-\phi(t))\, dx+\int_\Omega f_n(t)(u_n(t)-\phi(t))\, dx\\
=-\int_\Omega (\z_n(t), D(u_n(t)-\phi(t)))+\int_{\partial\Omega}(u_n(t)-\phi(t))[\z_n(t), \nu]\, d\mathcal H^{N-1}+\int_\Omega f_n(t)(u_n(t)-\phi(t))\, dx
\end{multline}
due to Green's formula.
Hence, it remains to verify that
\begin{multline*}
-\int_\Omega (\z_n(t), D(u_n(t)-\phi(t)))+\int_{\partial\Omega}(u_n(t)-\phi(t))[\z_n(t), \nu]\, d\mathcal H^{N-1}\\
\le -\int_\Omega (\z_\phi(t), D(u_n(t)-\phi(t)))+\int_{\partial\Omega}(u_n(t)-\phi(t))[\z_\phi(t), \nu]\, d\mathcal H^{N-1}
\end{multline*}
but it is straightforward by conditions (c) and (d) of the statement. Therefore, our claim is proven.

Now, consider inequality \eqref{fund} for $u_n$, apply Green's formula, fix $0<s<t<T$ and integrate  from $s$ to $t$ (recall that $H^{1}(0,T;BV(\Omega))\subset C([0,T];BV(\Omega))$):
\begin{multline}\label{ec:02}
\frac12 \int_\Omega (u_n(t)-\phi(t))^2dx-\frac12 \int_\Omega (u_n(s)-\phi(s))^2 \,dx+\int_s^t\int_\Omega \phi'(\sigma)(u_n(\sigma)-\phi(\sigma))\, dx\, d\sigma\\
-\int_s^t\int_\Omega \Div\z_\phi(\sigma) (u_n(\sigma)-\phi(\sigma))\, d\sigma\le  \int_s^t\int_\Omega f_n(\sigma)(u_n(\sigma)-\phi(\sigma))\, dx\, d\sigma \,.
\end{multline}
Since it is proven in \cite[Theorem 4.1]{LS} that the sequence $\{u_n\}_n$ converges to $u$ (the solution to problem \eqref{P}) in $L^\infty(0,T; L^2(\Omega))$, it follows that we may let $n$ go to infinity in \eqref{ec:02} and it yields
\begin{multline*}
\frac12 \int_\Omega (u(t)-\phi(t))^2dx-\frac12 \int_\Omega (u(s)-\phi(s))^2\,dx+\int_s^t\int_\Omega \phi'(\sigma)(u(\sigma)-\phi(\sigma))\, dx\, d\sigma\\
-\int_s^t\int_\Omega \Div\z_\phi(\sigma) (u(\sigma)-\phi(\sigma))\, d\sigma \le  \int_s^t\int_\Omega f(\sigma)(u(\sigma)-\phi(\sigma))\, dx\, d\sigma\,.
\end{multline*}
Differentiating this inequality and applying Green's formula again, we are done.
\end{pf}
   
Our next concern is to get, without any intention of generality, bounded solutions to \eqref{P} that can be chosen as test functions. To begin with, we consider a Gagliardo--Nirenberg type inequality in the setting of $BV(\Omega)$.

\begin{Lemma}\label{G-N}
Let $u \in L_w^1(0, T; BV(\Omega)) \cap L^\infty(0,T; L^1(\Omega))$. 
Then $u\in L^{\frac{N+1}N}((0,T)\times\Omega)$ and the following inequality holds:
\begin{multline*}
\int_0^T\int_\Omega|G_k(u(t))|^{\frac{N+1}N}dx\, dt\\
\le C \left[\sup_{t\in(0,T)}\int_\Omega |G_k(u(t))|\, dx\right]^{\frac1N}\int_0^T\left(\int_\Omega |DG_k(u(t))|+\int_{\partial\Omega}|G_k(u(t))|\, d\mathcal H^{N-1}\right)\, dt
\end{multline*}
for a certain constant $C>0$ depending only on $N$ and $\Omega$, where $G_k(s)$ is defined in \eqref{def_Gk}.
\end{Lemma}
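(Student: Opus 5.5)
The plan is to prove the inequality for each fixed $t$ by combining the Sobolev embedding for $BV$ with a Hölder interpolation, and then to integrate in time. Fix $t$ such that $u(t)\in BV(\Omega)$, and set $w=G_k(u(t))$. Since $G_k$ is Lipschitz with constant $1$, the chain rule recalled in Section 2 gives $w\in BV(\Omega)$ with $|Dw|\le|Du(t)|$.

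The first step is to extend $w$ by zero outside $\Omega$, obtaining $\tilde w\in BV(\R^N)$. Because $\partial\Omega$ is Lipschitz, the total variation of the extension is
\[
|D\tilde w|(\R^N)=\int_\Omega|Dw|+\intp|w|\,d\h .
\]
The Sobolev inequality in $BV(\R^N)$ (isoperimetric form) then yields
\[
\|w\|_{L^{\frac N{N-1}}(\Omega)}\le C_N\Big(\int_\Omega |Dw|+\intp |w|\,d\h\Big).
\]
For $N\ge2$ the exponent $\frac N{N-1}$ is finite, so this is legitimate.

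The second step is an interpolation. Write $|w|^{\frac{N+1}N}=|w|^{\frac1N}\,|w|$ and apply Hölder with exponents $N$ and $\frac N{N-1}$:
\[
\int_\Omega|w|^{\frac{N+1}N}dx\le \Big(\int_\Omega|w|\,dx\Big)^{\frac1N}\Big(\int_\Omega |w|^{\frac N{N-1}}dx\Big)^{\frac{N-1}N}=\|w\|_1^{\frac1N}\,\|w\|_{\frac N{N-1}} .
\]
Combining this with the first step gives the pointwise-in-time estimate
\[
\int_\Omega|G_k(u(t))|^{\frac{N+1}N}dx\le C\Big(\int_\Omega|G_k(u(t))|\,dx\Big)^{\frac1N}\Big(\int_\Omega |DG_k(u(t))|+\intp|G_k(u(t))|\,d\h\Big).
\]

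The third step is to bound $\int_\Omega|G_k(u(t))|\,dx$ by its supremum over $t$ and integrate from $0$ to $T$.

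The main obstacle is measurability in $t$. One needs $t\mapsto \int_\Omega|DG_k(u(t))|+\intp|G_k(u(t))|\,d\h$ to be measurable, and $u$ to be jointly measurable, given only that $u\in L^1_w(0,T;BV(\Omega))$. For the first point, the BV norm is lower semicontinuous with respect to $L^1$ convergence, so it can be written as a supremum over a countable family of test fields. This is Green's formula against $\varphi\in C^1(\overline\Omega;\R^N)$ with $\|\varphi\|_\infty\le1$:
\[
\int_\Omega|Dw|+\intp|w|\,d\h=\sup_{\varphi}\Big(-\int_\Omega w\,\Div\varphi\,dx\Big)\qquad\text{over } \varphi\in C^1_c(\R^N;\R^N),\ |\varphi|\le1 ,
\]
where $w$ is understood as extended by zero. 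Hence the map is a countable supremum of measurable functions of $t$, since $u\in L^\infty(0,T;L^1(\Omega))$ makes each $t\mapsto\int_\Omega G_k(u(t))\Div\varphi\,dx$ measurable. If the right-hand side of the stated inequality is infinite there is nothing to prove. Otherwise, taking $k=0$ shows $u\in L^{\frac{N+1}N}((0,T)\times\Omega)$, since $\int_0^T\|u(t)\|_{BV}\,dt<\infty$ by the definition of $L^1_w(0,T;BV(\Omega))$ (its norm is integrable).
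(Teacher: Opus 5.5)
Your proposal is correct and follows the paper's argument. Like the paper, you prove a pointwise-in-time estimate by combining Hölder interpolation between $L^1$ and $L^{\frac{N}{N-1}}$ with the Sobolev inequality for $BV$ including the boundary trace term, then bound $\int_\Omega|G_k(u(t))|\,dx$ by its supremum and integrate over $(0,T)$. Your extension-by-zero justification, the measurability remarks, and the observation that $k=0$ gives $u\in L^{\frac{N+1}{N}}((0,T)\times\Omega)$ fill in details the paper leaves implicit.
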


\begin{pf}
Applying the interpolation inequality followed by Sobolev's inequality, we obtain
\begin{multline*}
\left(\int_\Omega|G_k(u(t))|^\frac{N+1}Ndx\right)^{\frac{N}{N+1}}\le 
\left(\int_\Omega|G_k(u(t))|\, dx\right)^{\frac1{N+1}}\left(\int_\Omega|G_k(u(t))|^{\frac{N}{N-1}} dx\right)^{\frac{N-1}{N+1}}\\
\le C\left(\int_\Omega|G_k(u(t))|\, dx\right)^{\frac1{N+1}}\left(\int_\Omega|DG_k(u(t))|+\int_{\partial\Omega}|G_k(u(t))|\, d\mathcal H^{N-1}\right)^{\frac{N}{N+1}}
\end{multline*}
for almost all $t\in (0,T)$.

The result is obtained by raising both sides of the inequality to the power $\frac{N+1}N$ and integrating over $(0,T)$.
\end{pf}

\begin{Proposition}\label{test-func0}
If $f\in L^\infty((0,T)\times \Omega)$ and $u_0\in W^{1,1}(\Omega)\cap L^\infty(\Omega)$, then the unique solution to problem \eqref{P} is bounded.
\end{Proposition}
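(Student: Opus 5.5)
The plan is a Stampacchia-type argument: test the equation with $G_k(u(t))$ and combine the resulting energy estimate with the Gagliardo--Nirenberg inequality of Lemma \ref{G-N}. We may assume $f\in L^2((0,T)\times\Omega)$ and $u_0\in W^{1,1}(\Omega)\cap L^2(\Omega)$, since $\Omega$ is bounded; hence Proposition \ref{test-func} gives a unique solution $u$ with its field $\z$.

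By Remark \ref{test}, $G_k(u(t))\in BV(\Omega)\cap L^2(\Omega)$ is an admissible test function in condition (i), and Green's formula applies. The remark on truncations in Section 2 gives $(\z(t),DG_k(u(t)))=|DG_k(u(t))|$. Condition (iii) gives $-G_k(u(t))[\z(t),\nu]=|G_k(u(t))|$ on $\partial\Omega$, because $G_k$ has the sign of $u$. We also need the chain rule in time,
\[
\int_\Omega u'(t)G_k(u(t))\,dx=\frac{d}{dt}\int_\Omega \tfrac12 G_k(u(t))^2\,dx .
\]
This is fine because $u\in H^1(0,T;L^2(\Omega))$ and $s\mapsto \tfrac12G_k(s)^2$ is $C^1$ with Lipschitz derivative. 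Taking $k\ge \|u_0\|_\infty$, so that $G_k(u_0)=0$, and integrating in time we get, for all $t\in(0,T)$,
\[
\frac12\int_\Omega G_k(u(t))^2dx+\int_0^t\Big(\int_\Omega|DG_k(u)|+\int_{\partial\Omega}|G_k(u)|\,d\mathcal H^{N-1}\Big)ds\le \int_0^t\!\!\int_\Omega |f||G_k(u)|\,dx\,ds .
\]

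Because we need $\sup_t\int_\Omega|G_k(u)|$ for Lemma \ref{G-N}, I would rather run the estimate with the test function $T_\varepsilon(G_k(u))/\varepsilon$, which converges to $\sg(G_k(u))$. This gives
\[
\sup_t\int_\Omega|G_k(u(t))|\,dx\le \|f\|_\infty\int_0^T|A_k(t)|\,dt ,
\]
where $A_k(t)=\{|u(t)|>k\}$. It also gives
\[
\int_0^T\Big(\int_\Omega|DG_k(u)|+\int_{\partial\Omega}|G_k(u)|\Big)dt\le \|f\|_\infty\int_0^T\!\!\int_\Omega|G_k(u)|\,dx\,dt .
\]
The right-hand side of the last bound is at most $\|f\|_\infty\|G_k(u)\|_{L^{\frac{N+1}{N}}}\mu(k)^{\frac1{N+1}}$, where $\mu(k)=|\{(t,x):|u|>k\}|$, noting that $\int_0^T|A_k(t)|\,dt=\mu(k)$.

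Inserting these bounds into Lemma \ref{G-N}, we obtain
\[
\|G_k(u)\|_{\frac{N+1}N}^{\frac{N+1}N}\le C\|f\|_\infty^{1+\frac1N}\,\mu(k)^{\frac1N}\,\|G_k(u)\|_{\frac{N+1}N}\,\mu(k)^{\frac1{N+1}} .
\]
Hence
\[
\|G_k(u)\|_{\frac{N+1}N}^{\frac1N}\le C\mu(k)^{\frac1N+\frac1{N+1}} .
\]
For $h>k$ we have $\|G_k(u)\|_{\frac{N+1}N}\ge (h-k)\mu(h)^{\frac N{N+1}}$, and this yields
\[
\mu(h)\le \frac{C}{(h-k)^{\frac{N+1}{N}}}\,\mu(k)^{\beta},\qquad \beta=\frac{N+1}{N}\Big(1+\frac{N}{N+1}\Big)=\frac{2N+1}{N}>1 .
\]
Stampacchia's lemma then gives $\mu(k_0+d)=0$ for some finite $d$. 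Therefore $|u|\le k_0+d$ a.e., with $k_0=\|u_0\|_\infty$.

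The main obstacle is justifying the $L^1$-type estimate with test function $T_\varepsilon(G_k(u))/\varepsilon$. One needs the time-derivative identity $\int_\Omega u'\,T_\varepsilon(G_k(u))=\frac{d}{dt}\int_\Omega\Phi_\varepsilon(u)$ for the convex primitive $\Phi_\varepsilon$. This holds since $\Phi_\varepsilon$ is $C^1$ with Lipschitz derivative and $u\in H^1(0,T;L^2)$. One also needs the pairing $(\z,DT_\varepsilon(G_k u))=|DT_\varepsilon(G_ku)|\ge0$, which follows from the same splitting argument as for truncations. If the exponent bookkeeping in the iteration failed, I would instead iterate directly on $\|G_k(u)\|_{L^\infty(L^1)}$, which is also a decreasing function of $k$.
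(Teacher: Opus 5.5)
Your proposal is correct and is essentially the paper's argument: test with $G_k(u)$ for $k\ge\|u_0\|_\infty$, feed the bounds into Lemma \ref{G-N}, use H\"older, and conclude by Stampacchia's iteration from the same estimate $\int_0^T\!\int_\Omega|G_k(u)|^{\frac{N+1}{N}}\le C\mu(k)^{\frac{2N+1}{N}}$. The only deviation is that you control $\sup_t\|G_k(u(t))\|_{L^1(\Omega)}$ with an extra test function $T_\varepsilon(G_k(u))/\varepsilon$, whereas the paper bounds it by $|\Omega|^{1/2}\sup_t\|G_k(u(t))\|_{L^2(\Omega)}$ and uses the $L^2$ part of the $G_k(u)$ energy estimate you already derived (which is also where your $BV$ bound comes from, not the $T_\varepsilon$ test), so your second approximation is valid but not needed.
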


\begin{pf}
Let $u$ be the unique solution  to problem \eqref{P} with associated vector field $\z$. Now fix $k>\|u_0\|_\infty$. Taking $G_k(u)$ as test function (recall Remark \ref{test}), for almost every $0<t<T$ we obtain
\begin{multline*}
 \int_\Omega G_k(u(t ))u'(t )\,dx  + \int_\Omega |DG_k(u(t))|+  \int_{\partial\Omega}|G_k(u(t))|\, d\mathcal H^{N-1}
=    \int_\Omega f(t) G_k(u(t))\, dx \\
 \le \|f\|_\infty \int_\Omega |G_k(u(t))|\, dx.
\end{multline*}
Taking into account $u'\in L^2((0,T)\times\Omega)$, we have $G_k(u(t ))u'(t )=G_k(u(t ))G_k(u(t))'$ and so we may apply \cite[Proposition 3.8]{LS} (alternatively, we may appeal to Proposition \ref{Prop.Generalizada} below). Hence, 
\begin{equation*}
 \frac12 \left(\int_\Omega G_k(u(t ))^2\,dx\right)'  + \int_\Omega |DG_k(u(t))|+  \int_{\partial\Omega}|G_k(u(t))|\, d\mathcal H^{N-1} \le \|f\|_\infty \int_\Omega |G_k(u(t))|\, dx.
\end{equation*}
Given $0<\tau<T$, we integrate over $[0,\tau]$ to deduce
\begin{multline*}
\frac12 \int_\Omega G_k(u(\tau ))^2\,dx  +\int_0^\tau \int_\Omega |DG_k(u(t))|\, dt + \int_0^\tau \int_{\partial\Omega}|G_k(u(t))|\, d\mathcal H^{N-1}dt\\
 \le \frac12 \int_\Omega G_k(u_0)^2\,dx + \|f\|_\infty\int_0^T \int_\Omega |G_k(u(t))|\, dx\, dt
 = \|f\|_\infty\int_0^T \int_\Omega |G_k(u(t))|\, dx\, dt
\end{multline*}
due to the choice $k>\|u_0\|_\infty$.
It implies
\begin{multline}\label{ec:04}
\frac12 \sup_{\tau\in (0,T)}\int_\Omega G_k(u(\tau ))^2\,dx + \int_0^T \int_\Omega |D(G_k(u(t)))|\, dt + \int_0^T \int_{\partial\Omega}|G_k(u(t))|\, d\mathcal H^{N-1}dt\\
\le \|f\|_\infty\int_0^T  \int_\Omega |G_k(u(t))|\, dx\, dt\,.
\end{multline}
   
The next step is to apply first Lemma \ref{G-N}, then our estimate \eqref{ec:04} and finally H\"older's inequality:
\begin{multline*}
\int_0^T \int_\Omega |G_k(u(t))|^{\frac{N+1}N}\,dx\, dt\\
\le C\left[ \sup_{t\in (0,T)}\int_\Omega G_k(u(t))^2dx\right]^{\frac1{2N}}\left[\int_0^T \int_\Omega |DG_k(u(t))|\, dt + \int_0^T \int_{\partial\Omega}|G_k(u(t))|\, d\mathcal H^{N-1}dt\right]\\
\le C\left[\int_0^T  \int_\Omega |G_k(u(t))|\, dx\, dt\right]^{\frac{2N+1}{2N}}\\
\le C\left[\int_0^T \int_\Omega |G_k(u(t))|^{\frac{N+1}N}\, dt\right]^{\frac{2N+1}{2(N+1)}}|\{(t,x)\in (0,T)\times \Omega\>:\>|u(t,x)|>k\}|^{\frac{2N+1}{2N(N+1)}}
\end{multline*}
where $C$ stands for different constants that only depends on $N$ and $\Omega$.
Simplifying, it yields
\begin{equation*}
\int_0^T \int_\Omega |G_k(u(t))|^{\frac{N+1}N}\, dt\le C|\{|u|>k\}|^{\frac{2N+1}{N}}
\end{equation*}
for some constant only depending on the parameters of our problem. Since $\frac{2N+1}{N}>1$, a standard procedure (see \cite[Theorem 6.1, Chapter II]{LSU}) allows us to conclude that $u$ is bounded.   
\end{pf}

\begin{Remark}\rm
The above result can also be derived from formulation \eqref{fund}. Obviously, $T_k(u)$ cannot be taken as a test function in \eqref{fund} and an approximation with smooth functions is necessary.
Namely, for $0<\epsilon<k-\|u_0\|_\infty$, we denote by $T_k^\epsilon(s)$ a regularization of  $T_k(s)$ such that $T_k^\epsilon\in C^2(\R)$ and
\begin{equation*}
\big(T_k^\epsilon\big)'(s)=\left\{\begin{array}{ll}
1& \text{ if } |s|\le k-\epsilon\,,\\[2mm]
0 & \text{ if } |s|\ge k\,,
\end{array}\right. \quad\text{ with } \quad 0\le (T_k^\epsilon)'(s)\le 1\,.
\end{equation*}
Nevertheless, the subsequent process of letting $\epsilon$ go to 0 is cumbersome.
\end{Remark}

\section{An auxiliary result}

One of the biggest difficulties in handling the Total Variation Flow lies in the parabolic term. In the context of regular data, the evolution problem was analyzed in \cite{LS} relying on the identity
\begin{equation}\label{der-0}
\frac12\left(\int_\Omega u(t)^2dx\right)'=\int_\Omega u(t) u'(t)\, dx\,.
\end{equation}
This identity allows the use of the solution itself as a test function. However, such a property cannot be expected to hold when dealing with merely summable data. In fact, just truncations of solutions should be taken as test functions with $L^1$-data. Consequently, it is necessary to extend \eqref{der-0} to the form
\begin{equation*}
\left(\int_\Omega J_k(u(t))dx\right)'=\int_\Omega T_k(u(t)) u'(t)\, dx\,,
\end{equation*}
where $J_k(s)$ is defined in \eqref{def_Jk}. Actually, in this section we establish this identity for a general class of non-decreasing functions that includes truncations.

\begin{Proposition}\label{Prop.Generalizada}
Let $u\in H^1(0,T;L^2(\Omega))$ (so that $u\in C([0,T]; L^2(\Omega))$). Let $\varphi\colon \R\longrightarrow\R $ be a non-decreasing, bounded,  and Lipschitz-continuous function such that $\varphi(0)=0$. Define
\begin{equation*}
\psi (t)=\int_0^t \varphi(s)\,ds\,.
\end{equation*}
Then the following identity holds:
\begin{equation}\label{Prop-Gen1}
\left(\int_\Omega\psi(u(t))\,dx \right)' = \int_\Omega\varphi(u(t))\,u'(t)\,dx\,.
\end{equation}
Moreover, for any $\phi\in C_0^\infty(\Omega)$, it is satisfied
\begin{equation}\label{Prop-Gen2}
\left(\int_\Omega\phi\,\psi(u(t))\,dx \right)' = \int_\Omega\phi\,\varphi(u(t))\,u'(t)\,dx\,.
\end{equation}
\end{Proposition}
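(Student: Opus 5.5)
The plan is to prove the integrated version of \eqref{Prop-Gen1}: for all $0\le s<t\le T$,
\begin{equation*}
\int_\Omega\psi(u(t))\,dx-\int_\Omega\psi(u(s))\,dx=\int_s^t\int_\Omega\varphi(u(\sigma))\,u'(\sigma)\,dx\,d\sigma\,.
\end{equation*}
The right-hand side is an absolutely continuous function of $t$. Indeed, $\varphi$ is bounded and $u'\in L^2((0,T)\times\Omega)$, so the integrand lies in $L^1$. Differentiating then gives \eqref{Prop-Gen1} for almost every $t$. The same argument with the weight $\phi\in C_0^\infty(\Omega)$ inserted everywhere yields \eqref{Prop-Gen2}. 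It is useful to note first that $\psi$ is convex, since $\psi'=\varphi$ is non-decreasing. Moreover $|\psi(r)|\le L r^2/2$, where $L$ is the Lipschitz constant of $\varphi$, because $\varphi(0)=0$. Hence $\int_\Omega\psi(u(t))\,dx$ is finite and continuous in $t$, as $u\in C([0,T];L^2(\Omega))$.

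\textbf{Step 1: smooth in time.} For a.e.\ $x$ I would regularize in time, for instance by mollifying $u$ (extended suitably) or by using Steklov averages, to get $u_\epsilon\in C^1([0,T];L^2(\Omega))$. These satisfy $u_\epsilon\to u$ in $C([s,t];L^2(\Omega))$ and $u_\epsilon'\to u'$ in $L^2((s,t)\times\Omega)$ on compact subintervals. For smooth-in-time $u_\epsilon$ the pointwise chain rule $\partial_t\psi(u_\epsilon)=\varphi(u_\epsilon)\,\partial_t u_\epsilon$ holds, because $\psi\in C^1$ with Lipschitz derivative. Integrating over $(s,t)\times\Omega$ and using Fubini gives the integrated identity for $u_\epsilon$.

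\textbf{Step 2: pass to the limit.} The left-hand side converges because $|\psi(a)-\psi(b)|\le L(|a|+|b|)|a-b|$. For the right-hand side I would split
\begin{equation*}
\varphi(u_\epsilon)u_\epsilon'-\varphi(u)u'=\varphi(u_\epsilon)(u_\epsilon'-u')+(\varphi(u_\epsilon)-\varphi(u))u'\,.
\end{equation*}
The first term tends to $0$ in $L^1$ since $\varphi$ is bounded. The second tends to $0$ since $|\varphi(u_\epsilon)-\varphi(u)|\le L|u_\epsilon-u|\to0$ in $L^2$ and $u'\in L^2$. Letting $s\to0$ and $t\to T$ via continuity then covers the whole interval.

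\textbf{Main obstacle.} The delicate point is identifying the distributional time derivative $u'$ of the $L^2(\Omega)$-valued map with the pointwise $\partial_t u(\cdot,x)$ for a.e.\ $x$. This is needed to justify the mollification commuting with $'$ and the pointwise chain rule. I would handle it by working entirely with Bochner integrals: mollification in $t$ commutes with the vector-valued weak derivative, and $u(t)-u(s)=\int_s^t u'$ holds in $L^2(\Omega)$. This keeps the argument free of pointwise issues.

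An alternative for Step 1 is to avoid mollification altogether and use convexity of $\psi$ on difference quotients:
\begin{equation*}
\varphi(u(t))(u(t+h)-u(t))\le \psi(u(t+h))-\psi(u(t))\le\varphi(u(t+h))(u(t+h)-u(t))\,.
\end{equation*}
I would divide by $h$, integrate over $\Omega\times(s,t)$, and let $h\to0$. The outer terms converge to $\iint\varphi(u)u'$, using the strong convergence of difference quotients in $L^2$ and the continuity of $u$. The middle term telescopes into boundary averages. I would keep this as a fallback if the mollification bookkeeping becomes heavy.
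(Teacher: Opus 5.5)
Your argument is correct, but your main route is different from the paper's.

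The paper never regularizes $u$. It fixes $\eta\in C_0^\infty(0,T)$ and shifts a difference quotient from $\eta$ onto $\psi(u)$. It then writes $\psi(u(t+\delta))-\psi(u(t))=\alpha(t)\,(u(t+\delta)-u(t))$ with a secant slope $\alpha$ and splits $\alpha=\varphi(u(t))+(\alpha-\varphi(u(t)))$. The two pieces are rewritten as $\iint u'\Lambda_\delta$ and $\iint u'\Upsilon_\delta$ using Steklov averages, taken as Pettis integrals. The remainder vanishes because monotonicity of $\varphi$ gives $|\alpha(s)-\varphi(u(s))|\le L|u(s+\delta)-u(s)|$, combined with uniform continuity of $u$ in $L^2(\Omega)$. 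This yields the identity in the sense of distributions on $(0,T)$.

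Your mollification argument uses only the Lipschitz bound on $\varphi$; monotonicity plays no role. It also yields the integrated identity for all $s<t$, which gives directly the absolute continuity that the paper states separately in Corollary \ref{Cont-Abs}.

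The point you flag as the \emph{main obstacle} is actually harmless. The functional $F(v)=\int_\Omega\psi(v)\,dx$ satisfies $\bigl|F(v+h)-F(v)-\int_\Omega\varphi(v)h\,dx\bigr|\le\tfrac L2\|h\|_2^2$, so $F$ is Fr\'echet differentiable on $L^2(\Omega)$. Hence the chain rule holds for any curve in $C^1([s,t];L^2(\Omega))$ without choosing pointwise-in-$x$ representatives. It even holds for $u$ itself, which is absolutely continuous and a.e.\ differentiable with values in $L^2(\Omega)$.

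What the paper's difference-quotient mechanism buys is portability. Essentially the same computation is reused in the boundary-condition step of Theorem \ref{Teo-exist}, reduced there to the one-sided convexity inequality $T_k(u(t))(u(t+\delta)-u(t))\le J_k(u(t+\delta))-J_k(u(t))$. In that step $u$ lies only in $C([0,T];L^1(\Omega))$, its time derivative lives in a dual space, and the computation can only deliver an inequality. Your fallback convexity sandwich is exactly that mechanism in two-sided form. It is a cleaner way to organize the paper's $\alpha$/$\Upsilon_\delta$ splitting.
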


\begin{pf}
Let $\eta\in C_0^\infty(0,T)$ and take $\delta>0$ small enough so that the following calculations can be developed.

\begin{multline*}
-\intT \frac{\eta(t-\delta)-\eta(t)}{-\delta} \,\psi(u(t))\,dx\,dt
\\
= \intT  \frac{\eta(t-\delta)}{\delta}\,\psi(u(t))\,dx\,dt - \intT  \frac{\eta(t)}{\delta}\,\psi(u(t))\,dx\,dt\\
= \intT  \frac{\eta(t)}{\delta}\,\psi(u(t+\delta))\,dx\,dt - \intT  \frac{\eta(t)}{\delta}\,\psi(u(t))\,dx\,dt
\\
=\intT  \frac{\psi(u(t+\delta))-\psi(u(t))}{\delta} \,\eta(t)\,dx\,dt \,.
\end{multline*}
We define the auxiliary function
\begin{equation*}
\alpha(t,x)=\dfrac{\psi(u(t+\delta,x))-\psi(u(t,x))}{u(t+\delta,x)-u(t,x)}\quad \text{ if } u(t+\delta,x)\not=u(t,x)\,,
\end{equation*}
and $\alpha(t,x)=\varphi(u(t,x))$ if $u(t+\delta,x)=u(t,x)$. 
Thus, the previous equality can be written as
\begin{multline*}
-\intT \frac{\eta(t-\delta)-\eta(t)}{-\delta} \,\psi(u(t))\,dx\,dt=\intT  \frac{u(t+\delta)-u(t)}{\delta} \,\alpha(t)\,\eta(t)\,dx\,dt  \\
=\intT \frac{u(t+\delta)-u(t)}{\delta} \,\eta(t)\,\varphi(u(t))\,dx\,dt  +\intT \frac{u(t+\delta)-u(t)}{\delta} \,\eta(t)\,\big(\alpha(t)-\varphi(u(t))\big)\,dx\,dt \\
=A_1+A_2\,.
\end{multline*}
Consider now the function $\displaystyle \Lambda_\delta(t)=\dfrac{1}{\delta}\int_{t-\delta}^t\eta(s)\,\varphi(u(s))\,ds\in L^\infty(0,T;L^2(\Omega))$, where the integral is taken in the sense of Dunford. Notice that in \cite[Lemmas 3--4]{ABCM}, it is proven that it is actually a Pettis integral. We also remark that $\displaystyle\intT  u'(t)\,\Lambda_\delta(t)\,dx\,dt$ is well defined since $u'\in L^2((0,T)\times\Omega)$.

Therefore,
\begin{multline*}
\intT u'(t)\,\Lambda_\delta(t)\,dx\,dt = - \intT\Lambda_\delta'(t)\,u(t)\,dx\,dt \\
=- \intT \frac{\eta(t)\varphi(u(t))-\eta(t-\delta)\varphi(u(t-\delta))}{\delta}\,u(t)\,dx\,dt\\
=-\intT\frac{\eta(t)\varphi(u(t))}{\delta}\,u(t)\,dx\,dt+\intT \frac{\eta(t) \varphi(u(t))}{\delta}\,u(t+\delta)\,dx\,dt = A_1\,.
\end{multline*}

In a similar way, we define $\displaystyle\Upsilon_\delta(t)=\frac{1}{\delta}\int_{t-\delta}^t \eta(s)\big(\alpha(s)-\varphi(u(s)\big)\,ds$ which is also a Pettis integral. Let us now verify that $\Upsilon_\delta$ is bounded on $(0,T)$. Firstly, due to the Mean Value Theorem, there exists $\beta$ in the interval with endpoints $u(t+\delta)$ and $u(t)$ such that
\begin{equation*}
\psi(u(t+\delta))-\psi(u(t))=\varphi(\beta)\big(u(t+\delta)-u(t)\big)\,.
\end{equation*}
We highlight that $\beta$ depends on $x,t,\delta$ and it may not be unique; nevertheless, it allows us to write $\alpha(t)$ as $\varphi(\beta)$. Then it yields
\[
|\Upsilon_\delta(t)| \le  \frac{1}{\delta}\int_{t-\delta}^t |\eta(s)|\big|\varphi(\beta)-\varphi(u(s))\big|\,ds
\le 2\|\eta\|_{L^\infty(0,T)}\|\varphi\|_{L^\infty(\R)}\,.
\]
Moreover,
\begin{multline*}
\intT  u'(t)\,\Upsilon_\delta(t)\,dx\,dt =- \intT\Upsilon_\delta'(t)\,u(t)\,dx\,dt \\
=- \intT\frac{\eta(t)\big(\alpha(t)-\varphi(u(t))\big)-\eta(t-\delta)\big(\alpha(t-\delta)-\varphi(u(t-\delta))\big)}{\delta}\,u(t)\,dx\,dt\\
=-\intT\frac{\eta(t)\big(\alpha(t)-\varphi(u(t))\big)}{\delta}\,u(t)\,dx\,dt+\intT\frac{\eta(t)\big(\alpha(t)-\varphi(u(t))\big)}{\delta}\,u(t+\delta)\,dx\,dt = A_2\,.
\end{multline*}
Thus,
\begin{equation}\label{ec-prop}
-\intT \frac{\eta(t-\delta)-\eta(t)}{-\delta} \,\psi(u(t))\,dx\,dt=\intT u'(t)\,\Lambda_\delta(t)\,dx\,dt +\intT u'(t)\,\Upsilon_\delta(t)\,dx\,dt \,.
\end{equation}
Taking the limit as $\delta\to 0$ on the left hand side, we get
\begin{equation*}
\lim_{\delta\to0}-\intT \frac{\eta(t-\delta)-\eta(t)}{-\delta} \,\psi(u(t))\,dx\,dt =-\intT \eta'(t) \,\psi(u(t))\,dx\,dt
\end{equation*}
since  the Mean Value Theorem leads to $\left|\frac{\eta(t-\delta)-\eta(t)}{-\delta}\right|=\left|\eta'(\zeta)\right|\le\|\eta'\|_\infty$.

On the other hand, the first term on the right hand side of \eqref{ec-prop} satisfies
\begin{equation*}
\lim_{\delta\to0} \intT u'(t)\,\Lambda_\delta(t)\,dx\,dt = \intT \eta(t)\,\varphi(u(t))\,u'(t)\,dx\,dt\,;
\end{equation*}
observe that Lebesgue's Theorem can be applied due to $|\Lambda_\delta(t)|\le \|\eta\|_{L^\infty(0,T)}\|\varphi\|_{L^\infty(\R)}$.

Finally, we  show that $ \lim_{\delta\to0} \intT u'(t)\,\Upsilon_\delta(t)\,dx\,dt = 0$. Indeed, recall that we may write $\alpha(s)=\varphi(\beta)$ for some $\beta$ between $u(s+\delta)$ and $u(s)$. Hence, the monotonicity of $\varphi$ implies
\begin{equation*}
|\alpha(s)-\varphi(u(s))|=|\varphi(\beta)-\varphi(u(s))| \le |\varphi(u(s+\delta))-\varphi(u(s))|\le L |u(s+\delta)-u(s)|
\end{equation*} 
where $L$ stands for the Lipschitz constant of $\varphi$. Then, having in mind $u\in C([0,T];L^2(\Omega))$ and $u'\in L^2((0,T)\times\Omega))$, it gives
\begin{multline*}
\left|\int_\Omega \Upsilon_\delta (t)\,u'(t)\,dx\right|\le  \int_\Omega\frac{1}{\delta}\int_{t-\delta}^t |\eta(s)|\,L\,|u(s+\delta)-u(s)|\, |u'(t)|\,ds\,dx\\
\le L \|\eta\|_{L^\infty(0,T)} \sup_{t\in[0,T]}\left(\int_\Omega\big(u(t+\delta)-u(t)\big)^2\,dx\right)^{\frac12}\left(\int_\Omega|u'(t)|^2\,dx\right)^{\frac12}\in L^1(0,T)\,.
\end{multline*}
As a consequence,
\begin{equation*}
\left|\int_0^T\int_\Omega \Upsilon_\delta (t)\,u'(t)\,dx\, dt\right|\le  C\,\sup_{t\in[0,T]}\left(\int_\Omega\big(u(t+\delta)-u(t)\big)^2\,dx\right)^{\frac12} \longrightarrow 0\,.
\end{equation*}
Therefore, letting $\delta$ to $0$ in \eqref{ec-prop} we get
\begin{equation*}
-\intT \eta'(t) \,\psi(u(t))\,dx\,dt=\intT \eta(t)\,\varphi(u(t))\,u'(t)\,dx\,dt \,
\end{equation*}
for any $\eta\in C_0^\infty(0,T)$ and then \eqref{Prop-Gen1} is obtained.

Identity \eqref{Prop-Gen2} can be proven reasoning in a similar way.
\end{pf}

\begin{Remark}\rm
We point out that the above proof also holds when $u\in C([0,T];L^2(\Omega))$ and  $u'\in L^2((\epsilon,T)\times\Omega)$ for all $\epsilon>0$. This regularity occurs for solutions to problem  \eqref{P} with data $u_0\in L^2(\Omega)$ and $f\in L^2((0,T)\times\Omega)$,  as established  in \cite{T} (see also \cite{SW} and \cite{LS}).
\end{Remark}

\begin{Corollary}\label{Cont-Abs}
Assume that $f\in L^2((0,T)\times \Omega)$ and $u_0\in W^{1,1}(\Omega)\cap L^2(\Omega)$, and let $u$ denote the unique solution to problem \eqref{P}.
Let  $\varphi\colon \R\longrightarrow\R $ be a non-decreasing, bounded,  and Lipschitz-continuous function with $\varphi(0)=0$, and define
\begin{equation*}
\psi (t)=\int_0^t \varphi(s)\,ds\,.
\end{equation*}

Then the function $\displaystyle t\longmapsto \int_\Omega \psi(u(t))\,dx $
is absolutely continuous on $(0,T)$.

Moreover, the function $\displaystyle t\longmapsto \int_\Omega \phi\, \psi(u(t))\,dx $ is also absolutely continuous on $(0,T)$ for any $\phi\in C_0^\infty(\Omega)$.
\end{Corollary}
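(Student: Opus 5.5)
The plan is to deduce the corollary from Proposition \ref{Prop.Generalizada}: the distributional derivative of $t\mapsto\int_\Omega\psi(u(t))\,dx$ will be identified with an $L^1(0,T)$ function, and absolute continuity will then follow from the standard fact that a function in $L^1(0,T)$ whose distributional derivative lies in $L^1(0,T)$ coincides a.e. with an absolutely continuous function.

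First I check the hypotheses of Proposition \ref{Prop.Generalizada}. By Proposition \ref{test-func} and Definition \ref{def-reg}, the solution satisfies $u\in H^1(0,T;L^2(\Omega))$, and hence $u\in C([0,T];L^2(\Omega))$ and $u'\in L^2((0,T)\times\Omega)$. This is exactly the regularity the proposition needs. Applying it gives
\[
\Big(\int_\Omega\psi(u(t))\,dx\Big)'=\int_\Omega\varphi(u(t))\,u'(t)\,dx
\]
in $\mathcal D'(0,T)$. Next I show that the right-hand side is integrable in time. Since $\varphi$ is bounded,
\[
\int_0^T\Big|\int_\Omega\varphi(u)u'\,dx\Big|dt\le\|\varphi\|_\infty|\Omega|^{1/2}T^{1/2}\|u'\|_{L^2((0,T)\times\Omega)}<\infty .
\]
I also need $g(t):=\int_\Omega\psi(u(t))\,dx$ to be a genuine function in $L^1(0,T)$, and in fact continuous. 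Because $\varphi$ is bounded, $|\psi(s)|\le\|\varphi\|_\infty|s|$, and $\psi$ is Lipschitz with constant $\|\varphi\|_\infty$. Hence
\[
|g(t)-g(s)|\le\|\varphi\|_\infty\|u(t)-u(s)\|_{L^1(\Omega)},
\]
so $g$ is continuous on $[0,T]$ since $u\in C([0,T];L^2(\Omega))$.

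With $g$ continuous, $g'=h\in L^1(0,T)$ in $\mathcal D'(0,T)$, and the conclusion follows. The function $G(t)=\int_0^t h$ is absolutely continuous, and $(g-G)'=0$ in $\mathcal D'(0,T)$, so $g-G$ is a.e. constant. Since both $g$ and $G$ are continuous, $g-G$ is constant everywhere, and therefore $g$ itself is absolutely continuous.

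The localized statement follows by the same argument, using \eqref{Prop-Gen2} in place of \eqref{Prop-Gen1}. The integrability bound becomes $\|\phi\|_\infty\|\varphi\|_\infty\int|u'|$, and continuity becomes $|g_\phi(t)-g_\phi(s)|\le\|\phi\|_\infty\|\varphi\|_\infty\|u(t)-u(s)\|_1$.

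There is no real obstacle here. The only point needing care is the passage from ``distributional derivative in $L^1$'' to ``absolutely continuous'', which requires $g$ to be continuous rather than merely a.e. defined; the Lipschitz bound on $\psi$ supplies that continuity.
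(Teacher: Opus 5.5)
Your proposal is correct and follows the paper's proof, which likewise deduces absolute continuity directly from identity \eqref{Prop-Gen1} (resp.\ \eqref{Prop-Gen2}) together with $u'\in L^2((0,T)\times\Omega)$ and the boundedness of $\varphi$. You additionally spell out a step the paper leaves implicit: the continuity of $t\mapsto\int_\Omega\psi(u(t))\,dx$, obtained from the Lipschitz bound on $\psi$, which lets you pass from an $L^1$ distributional derivative to absolute continuity of the function itself.
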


\begin{pf}
It is a consequence of
\[\left(\int_\Omega\psi(u(t))\,dx \right)' = \int_\Omega\varphi(u(t))\,u'(t)\,dx\]
since $u'\in L^2((0,T)\times\Omega)$ and $\varphi$ is bounded.
\end{pf}

\section{Entropy solution}

In this section, we introduce the notion of entropy solution we employ and prove a fundamental result that guarantees the existence of such a solution. Standard weak solution cannot be expected due to the non regularity of the data.

\begin{Definition}\label{entropy}
Let $f\in L^1((0,T)\times\Omega)$ and $u_0\in L^1(\Omega)$. We say that $u$ is an \textbf{entropy solution} to problem \eqref{P} if
\begin{itemize}
\item[(i) ]$u\in C([0,T];L^1(\Omega))$ and $T_k(u)\in L_w^1(0,T;BV(\Omega))$  for all $k>0$;
\item[(ii)] there exists $\z\in L^\infty((0,T)\times\Omega;\R^N)$ with $ \|\z\|_\infty\le1$  such that the distributional divergence $\Div \z \in L^1(0,T;BV(\Omega)\cap L^\infty(\Omega))^*$ and, for almost all $t\in[0,T]$, the pairing $(\z(t),Dv)$ is a Radon measure for all $v\in BV(\Omega)\cap L^\infty (\Omega)$;
\item[(iii)] for almost all $t\in [0,T]$, the weak trace $[\z(t),\nu]$ is well defined and it satisfies
\begin{enumerate}
\item $\|[\z(t),\nu]\|_\infty\le1$,
\item $[\z(t),\nu] \in \sg(-u(t))$ $\h$-a.e. on $\partial\Omega$,
\item the following Green's formula holds:
\begin{equation*}
\langle\,\Div\z(t),v\,\rangle_\Omega + \int_\Omega (\z(t),Dv) = \intp v\,[\z(t),\nu]\,d\h \quad\forall\,v\in BV(\Omega)\cap L^\infty(\Omega)\,;
\end{equation*}
\end{enumerate}
\item[(iv)] for every $\phi\in L^1_w(0,T;BV(\Omega))\cap H^{1}(0,T;L^2(\Omega))\cap L^\infty((0,T)\times\Omega) $ 
such that there exists $ \z_\phi\in L^\infty((0,T)\times\Omega;\R^N)$  satisfying
\begin{equation}\label{ent-for1}
\|\z_\phi\|_\infty\le 1\,,\qquad  \Div \z_\phi\in L^2((0,T)\times\Omega)\,
\end{equation}
and, for almost every  $t\in(0,T)$,
\begin{equation}\label{ent-for2}
 (\z_\phi(t),D\phi(t))=|D\phi(t)|\qquad\text{ and }\qquad [\z_\phi(t),\nu]\in\sg(-\phi(t))
\end{equation}
the following inequality holds:
\begin{multline*}
\left(\int_\Omega J_k(u(t)-\phi(t))\,dx \right)^\prime+ \int_\Omega \phi'(t)T_k(u(t)-\phi(t))\,dx  \\
\displaystyle+ \int_\Omega (\z_\phi(t),DT_k(u(t)-\phi(t)))  - \intp T_k(u(t)-\phi(t))[\z_\phi(t),\nu]\,d \h \\
\displaystyle\le \int_\Omega f(t)T_k(u(t)-\phi(t))\,dx
\end{multline*}
for almost all $t\in(0,T)$.
\end{itemize}

Given $f\in L^1((0,T)\times\Omega)$ for all $T>0$ and $u_0\in L^1(\Omega)$, we say that $u\in C([0,+\infty); L^1(\Omega))$ is a global entropy solution to \eqref{Pg} if it is a solution to \eqref{P} for every $T>0$.
\end{Definition}

\begin{Remark}\label{ext}\rm
The symbol $\Div\z(t)$ denotes the distributional divergence of the vector field $\z(t)$, which can be uniquely extended to an element of the dual space of $W_0^{1,1}(\Omega)\cap L^\infty(\Omega)$. Consequently, we may regard $\Div\z\in L^1(0,T;W_0^{1,1}(\Omega)\cap L^\infty(\Omega))^*$. By requiring that $\Div \z \in L^1(0,T;BV(\Omega)\cap L^\infty(\Omega))^*$, we select a specific extension, which is not necessarily unique. The extension of interest for our purposes is obtained as the limit (in some sense that will be determined) of the divergences of the vector fields associated to the approximating problems.
\end{Remark}

As expected, we verify that every solution in the sense of Definition \ref{def-reg} (corresponding to regular data) constitutes an entropy solution.

\begin{Proposition}\label{ent-reg}
Assume that $f\in L^2((0,T)\times \Omega)$ and $u_0\in W^{1,1}(\Omega)\cap L^2(\Omega)$. Then, the unique solution to problem \eqref{P} in the sense of Definition \ref{def-reg} is an entropy solution.
\end{Proposition}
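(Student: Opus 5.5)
We verify conditions (i)--(iv) of Definition \ref{entropy} one at a time, taking $\z$ to be the vector field of Definition \ref{def-reg}.

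\textbf{Conditions (i)--(iii).} Since $u\in C([0,T];L^2(\Omega))$ and $\Omega$ is bounded, $u\in C([0,T];L^1(\Omega))$. Because $T_k$ is Lipschitz with constant $1$, the chain rule gives $|DT_k(u(t))|\le|Du(t)|$, so $T_k(u)\in L^1_w(0,T;BV(\Omega))$. For (ii), condition (i) of Definition \ref{def-reg} yields
\[
\Div\z=u'-f\in L^2((0,T)\times\Omega)\subset L^1(0,T;L^1(\Omega)).
\]
This embeds into $L^1(0,T;BV(\Omega)\cap L^\infty(\Omega))^*$ through $v\mapsto\int_\Omega v\,\Div\z(t)\,dx$. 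Anzellotti's theory then makes $(\z(t),Dv)$ a Radon measure for every $v\in BV(\Omega)\cap L^\infty(\Omega)$. The normal trace satisfies $\|[\z(t),\nu]\|_\infty\le\|\z\|_\infty\le1$, the sign condition is (iii) of Definition \ref{def-reg}, and Green's formula is the standard Anzellotti one.

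\textbf{Condition (iv).} Fix an admissible $\phi$ with field $\z_\phi$, and set $w=u-\phi\in H^1(0,T;L^2(\Omega))$. Apply Proposition \ref{Prop.Generalizada} with $\varphi=T_k$ and $\psi=J_k$:
\[
\Big(\int_\Omega J_k(w(t))\,dx\Big)'=\int_\Omega T_k(w(t))\,(u'(t)-\phi'(t))\,dx .
\]
This is the analogue of the first step in the proof of Proposition \ref{test-1}. The test function $T_k(w(t))$ lies in $BV(\Omega)\cap L^\infty(\Omega)$, so Remark \ref{test} allows it. Using $u'=\Div\z+f$ and Green's formula,
\[
\int_\Omega T_k(w)\,u'\,dx=-\int_\Omega(\z,DT_k(w))+\int_{\partial\Omega}T_k(w)[\z,\nu]\,d\h+\int_\Omega f\,T_k(w)\,dx .
\]
It then suffices to show, for a.e.\ $t$,
\[
-\int_\Omega(\z,DT_k(w))+\int_{\partial\Omega}T_k(w)[\z,\nu]\,d\h\le-\int_\Omega(\z_\phi,DT_k(w))+\int_{\partial\Omega}T_k(w)[\z_\phi,\nu]\,d\h .
\]

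\textbf{Main obstacle: the comparison of pairings.} The proof of Proposition \ref{test-1} calls this ``straightforward'', but here it needs truncation-level monotonicity of the pairing. I would first rewrite the interior difference, by Green's formula for both fields, as
\[
\int_\Omega(\z-\z_\phi,DT_k(w))=-\int_\Omega T_k(w)\,\Div(\z-\z_\phi)\,dx+\int_{\partial\Omega}T_k(w)[\z-\z_\phi,\nu]\,d\h .
\]
Then I would prove $\int_\Omega(\z-\z_\phi,DT_k(u-\phi))\ge0$ by approximation. Mollify $T_k$ by smooth nondecreasing $S_\epsilon$ and use the chain rule for the pairing, $(\mathbf{w},DS(v))=S'(\tilde v)(\mathbf{w},Dv)$ on the diffuse part with the jump part handled by averaged derivatives. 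This reduces the claim to the measure inequality
\[
(\z-\z_\phi,D(u-\phi))=|Du|-(\z_\phi,Du)+|D\phi|-(\z,D\phi)\ge0
\]
after localization by nonnegative weights. That inequality follows from $(\z,Du)=|Du|$, $(\z_\phi,D\phi)=|D\phi|$, and $\|\z\|_\infty,\|\z_\phi\|_\infty\le1$.

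If this chain-rule route proves too delicate, my fallback is to test with $S_\epsilon(w)$ directly in the identity above and use the coarea formula for the pairing on level sets of $w$.

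\textbf{Boundary term.} Pointwise on $\partial\Omega$, $[\z,\nu]\in\sg(-u)$ and $[\z_\phi,\nu]\in\sg(-\phi)$, together with $T_k$ being odd and monotone, give
\[
T_k(u-\phi)\big([\z,\nu]-[\z_\phi,\nu]\big)\le0 .
\]
Collecting the interior and boundary estimates gives exactly the inequality in (iv).
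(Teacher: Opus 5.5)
Your proposal is correct and follows essentially the paper's argument: test the equation with $T_k(u(t)-\phi(t))$, rewrite the time-derivative term via Proposition~\ref{Prop.Generalizada} with $\varphi=T_k$, and conclude by Green's formula together with the signs $(\z(t)-\z_\phi(t),DT_k(u(t)-\phi(t)))\ge0$ and $T_k(u(t)-\phi(t))\,[\z(t)-\z_\phi(t),\nu]\le0$. The differences are only in detail: the paper treats (i)--(iii) as evident and simply asserts the interior sign, whereas you check (i)--(iii) explicitly and justify that sign via the chain rule for the Anzellotti pairing, reducing it to $(\z-\z_\phi,D(u-\phi))\ge0$; this is a legitimate way to fill in that step.
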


\begin{pf}
Only condition (iv) of Definition \ref{entropy} has to be proven. Let $\phi$  satisfy the conditions set forth in (iv).

Fixed $0<t<T$, we take the test function $T_k(u(t)-\phi(t))$ in \eqref{P}:
\begin{multline}\label{ec-1}
\int_\Omega u'(t)\,T_k(u(t)-\phi(t))\,dx \\= \int_\Omega T_k(u(t)-\phi(t))\,\Div\z(t)\,dx + \int_\Omega f(t)\,T_k(u(t)-\phi(t))\,dx\,.
\end{multline}
The first integral can be written as (adding $\pm \phi'(t)$):
\begin{multline*}
\int_\Omega u'(t)\,T_k(u(t)-\phi(t))\,dx  \\
=\int_\Omega ( u(t)-\phi(t))'\,T_k(u(t)-\phi(t))\,dx  + \int_\Omega \phi'(t)\,T_k(u(t)-\phi(t))\,dx \\
=\left(\int_\Omega J_k(u(t)-\phi(t))\,dx\right)' + \int_\Omega \phi'(t)\,T_k(u(t)-\phi(t))\,dx \,,
\end{multline*}
where we have used Proposition \ref{Prop.Generalizada}.

On the other hand, the second integral of \eqref{ec-1} becomes
\begin{multline*}
\int_\Omega T_k(u(t)-\phi(t))\,\Div\z(t)\,dx \\
=\int_\Omega T_k(u(t)-\phi(t))\,\Big\{\Div\z(t)-\Div\z_\phi(t)\Big\}\,dx +\int_\Omega T_k(u(t)-\phi(t))\,\Div\z_\phi(t)\,dx \\
\le \int_\Omega T_k(u(t)-\phi(t))\,\Div\z_\phi(t)\,dx \,,
\end{multline*}
as a consequence of Green's formula since $\big(\z(t)-\z_\phi(t), DT_k(u(t)-\phi(t))\big)\ge0$ and $T_k(u(t)-\phi(t))[\z(t)-\z_\phi(t) , \nu]\le0$.
Therefore,
\begin{multline*}
\left(\int_\Omega J_k(u(t)-\phi(t))\,dx\right)' + \int_\Omega \phi'(t)\,T_k(u(t)-\phi(t))\,dx \\
-\int_\Omega T_k(u(t)-\phi(t))\,\Div\z_\phi(t)\,dx \le \int_\Omega f(t)\,T_k(u(t)-\phi(t))\,dx\,,
\end{multline*}
which shows that $u$ is also an entropy solution.
\end{pf}

With the definition established, we proceed to prove the existence of such solutions.

\begin{Theorem}\label{Teo-exist}
There exists at least one entropy solution to problem \eqref{P} for every source $f\in L^1((0,T)\times\Omega)$ and initial datum $u_0\in L^1(\Omega)$. Moreover, this solution satisfies
\begin{equation}\label{u-test}
\left(\int_\Omega J_k(u(t))\,dx\right)'+\int_\Omega |DT_k(u(t))|+\int_{\partial\Omega}|T_k(u(t))|\, d\mathcal H^{N-1}=\int_\Omega T_k(u(t)) f(t)\, dx
\end{equation}
for all $k>0$ and almost all $t\in(0,T)$.
\end{Theorem}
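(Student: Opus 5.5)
We will obtain the entropy solution as the limit of solutions to problems with regular data. Choose $f_n\in L^\infty((0,T)\times\Omega)$ with $f_n\to f$ in $L^1((0,T)\times\Omega)$ and $u_{0n}\in W^{1,1}(\Omega)\cap L^\infty(\Omega)$ with $u_{0n}\to u_0$ in $L^1(\Omega)$, for instance by truncating and mollifying. Let $u_n$ be the bounded solution given by Proposition \ref{test-func} and Proposition \ref{test-func0}, with vector field $\z_n$. By Proposition \ref{ent-reg} each $u_n$ is an entropy solution.

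\textbf{Step 1: estimates and the identity \eqref{u-test} for $u_n$.} Test the equation for $u_n$ with $T_k(u_n(t))$ (Remark \ref{test}) and use Proposition \ref{Prop.Generalizada} together with Green's formula and conditions (ii)--(iii) of Definition \ref{def-reg}. This gives \eqref{u-test} for $u_n$ and $f_n$. Integrating in time yields
\[
\sup_t\int_\Omega J_k(u_n(t))\,dx+\int_0^T\!\Big(\int_\Omega|DT_ku_n|+\intp|T_ku_n|\,d\h\Big)dt\le k\big(\|f_n\|_{L^1}+\|u_{0n}\|_{L^1}\big).
\]
Next, test the difference of the equations for $u_n$ and $u_m$ with $T_k(u_n-u_m)$ and divide by $k$. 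Since $(\z_n-\z_m,DT_k(u_n-u_m))\ge0$ and the boundary term has the right sign, letting $k\to0$ gives the $L^1$-contraction
\[
\sup_t\|u_n(t)-u_m(t)\|_1\le\|u_{0n}-u_{0m}\|_1+\|f_n-f_m\|_{L^1}.
\]
Hence $u_n\to u$ in $C([0,T];L^1(\Omega))$. In particular $T_k(u_n)\to T_k(u)$ in $L^1$, and by lower semicontinuity $T_k(u)(t)\in BV(\Omega)$ with $T_k(u)\in L^1_w(0,T;BV(\Omega))$. Up to subsequences, $\z_n\estrella\z$ weakly$^*$ in $L^\infty$ with $\|\z\|_\infty\le1$.

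\textbf{Step 2: the divergence, the pairing and the traces.} From $\Div\z_n=u_n'-f_n$ we cannot get $L^1$ bounds on $\Div\z_n$. Instead we regard $\Div\z_n(t)$ as functionals on $BV\cap L^\infty$. For $v\in BV(\Omega)\cap L^\infty(\Omega)$, Green's formula gives
\[
\langle\Div\z_n(t),v\rangle=\intp v[\z_n,\nu]\,d\h-\int_\Omega(\z_n,Dv),
\]
so the norms are bounded by $C\|v\|$ uniformly in $n$. We then define $\Div\z$ as a weak$^*$ limit in $L^1(0,T;BV\cap L^\infty)^*$, identifying the limit pointwise in $t$ by testing against $\eta(t)v$. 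We also pass to the limit in the pairings $(\z_n(t),Dv)$ and in the traces $[\z_n(t),\nu]$, which are bounded by $1$ in $L^\infty(\partial\Omega)$. This gives $(\z(t),Dv)$ as a Radon measure and Green's formula (iii)(3), which matches the selection of the extension described in Remark \ref{ext}. The condition $[\z,\nu]\in\sg(-u)$ is obtained from the boundary part of the energy identity via lower semicontinuity, as in \cite{LS}.

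\textbf{Step 3: passing to the limit in (iv) and in \eqref{u-test}.} For a test function $\phi$ as in (iv), the inequality for $u_n$ integrated over $(s,t)$ reads
\[
\int_\Omega J_k(u_n-\phi)\Big|_s^t+\int_s^t\!\!\int_\Omega\phi'\,T_k(u_n-\phi)-\int_s^t\!\!\int_\Omega\Div\z_\phi\,T_k(u_n-\phi)\le\int_s^t\!\!\int_\Omega f_nT_k(u_n-\phi).
\]
Every term converges, because $T_k(u_n-\phi)$ converges boundedly almost everywhere, $J_k$ is Lipschitz, and $u_n\to u$ in $C([0,T];L^1)$. Differentiating in $t$ and applying Green's formula for $\z_\phi$ gives (iv).

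For \eqref{u-test}, integrated in time, the ``$\ge$'' direction follows from lower semicontinuity of $\int_\Omega|Dw|+\intp|w|$ under $L^1$ convergence. The ``$\le$'' direction is the main obstacle: we need to compare the limit identity with $-\langle\Div\z(t),T_ku(t)\rangle$ and show $(\z,DT_ku)=|DT_ku|$, at least integrated in time. I would try an analogue of \cite[Proposition 3.8]{LS} (the Proposition 4.1 mentioned in the introduction). This means proving that $t\mapsto\int_\Omega J_k(u(t))$ is absolutely continuous with derivative $\langle u'(t),T_ku(t)\rangle$, where $u'=\Div\z+f$, using Steklov averages as in Proposition \ref{Prop.Generalizada}. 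Combined with the limit of the integrated identity, this yields
\[
\int_0^T\!\Big(\int_\Omega|DT_ku|+\intp|T_ku|\,d\h\Big)=\int_0^T\!\Big(\int_\Omega(\z,DT_ku)-\intp T_ku[\z,\nu]\,d\h\Big).
\]
Since the integrand on the left dominates the one on the right pointwise in $t$, we get equality for almost every $t$. This gives \eqref{u-test} and the sign condition on the trace.
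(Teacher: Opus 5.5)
There is a genuine gap, in the step you yourself flag as the main obstacle: the half of \eqref{u-test} not given by lower semicontinuity, and with it the sign condition $[\z(t),\nu]\in\sg(-u(t))$. The rest of your plan matches the paper: the approximation, the $L^1$-contraction via $T_k(u_n-u_m)$, lower semicontinuity for (i), weak$^*$ limits for $\z$ and $\Div\z$, and the limit in (iv) integrated over $(s,t)$.

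You propose to prove the full chain rule $\big(\int_\Omega J_k(u(t))\,dx\big)'=\ldoble\xi(t),T_k(u(t))\rdoble_\Omega$ for the limit $u$ by Steklov averages, as in Proposition \ref{Prop.Generalizada}. That argument does not transfer.
\begin{itemize}
\item There the error term $\int u'\Upsilon_\delta$ is killed using $u'\in L^2$ and $u\in C([0,T];L^2(\Omega))$, neither of which is available now.
\item $\Upsilon_\delta$ is only bounded, not known to be in $BV$, so it is not even admissible against $\xi\in L^1(0,T;BV\cap L^\infty)^*+L^1$.
\item The main term requires $\frac1\delta\int_{t-\delta}^t\eta(s)\int_\Omega(\z(t),DT_k(u(s)))\,ds\to\eta(t)\int_\Omega(\z(t),DT_k(u(t)))$. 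This would need the Steklov averages of $T_k(u)$ to converge essentially strongly in $BV$, but $T_k(u)\in L^1_w(0,T;BV(\Omega))$ only.
\end{itemize}
Indeed, your last step would yield \eqref{igual} for a.e.\ $t$. By Remark \ref{igual-r}, the identity you want is equivalent to \eqref{igual}, which the paper states it cannot obtain from the available convergences. So you are aiming at a stronger statement with no working mechanism. Also, your Step 2 claim that the trace sign follows from lower semicontinuity alone is not right, since that gives only one inequality.

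The missing idea is that a one-sided inequality suffices, and it never requires evaluating $\ldoble\xi(t),T_k(u(t))\rdoble$.
\begin{enumerate}
\item Establish that $\xi=\Div\z+f$ is the time derivative of $u$ in the sense of Definition \ref{def_deriv_temporal}.
\item Test with $\Psi_\delta(t)=\frac1\delta\int_{t-\delta}^t\eta(s)T_k(u(s))\,ds$, $\eta\ge0$. By convexity, $T_k(a)(b-a)\le J_k(b)-J_k(a)$, so $\int_0^T\ldoble\xi,\Psi_\delta\rdoble\,dt\le\frac1\delta\int_0^T(\eta(t-\delta)-\eta(t))\int_\Omega J_k(u(t))\,dx\,dt$.
\item Compute $\ldoble\xi,\Psi_\delta\rdoble$ by Green's formula and bound $(\z(t),DT_k(u(s)))\le|DT_k(u(s))|$ \emph{before} letting $\delta\to0$. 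The bound has a computable limit because $t\mapsto\int_\Omega|DT_k(u(t))|$ is in $L^1(0,T)$.
\item This gives $\int_0^T\eta'\int_\Omega J_k(u)\le\int_0^T\eta\big[\int_\Omega|DT_ku|-\intp T_ku\,[\z,\nu]\,d\h-\int_\Omega T_ku\,f\big]$.
\item The right-hand side is at most the same expression with $\intp|T_ku|\,d\h$. By your lower semicontinuity inequality, that is at most $\int_0^T\eta'\int_\Omega J_k(u)$.
\end{enumerate}
The chain therefore collapses to equalities, yielding \eqref{u-test} and $-T_ku\,[\z,\nu]=|T_ku|$ on $\partial\Omega$ simultaneously. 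Meanwhile \eqref{igual} holds only in mean.
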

\begin{pf}
In order to prove the existence of entropy solutions, we follow the argument in \cite[Theorem 4.1]{LS} with slight modifications which we detail below.

We begin with the approximating problems
\begin{equation}\label{Pn-exist}
\left\{\begin{array}{ll}
u_n'-\Delta_1 u_n=f_n & \text{ in }(0,T)\times\Omega\,,\\
u_n=0&\text{ on } (0,T)\times\partial\Omega\,,\\
u_n(0,x)=u^0_n(x)&\text{ in }\Omega\,,
\end{array}\right.
\end{equation}
where we choose sequences $f_n\in L^\infty((0,T)\times\Omega)$ and $u_n^0\in L^\infty(\Omega)\cap W^{1,1}(\Omega)$ such that $f_n\to f $ in $L^1((0,T)\times\Omega)$ and $u_n^0\to u_0 $ in $L^1(\Omega)$. 

By Proposition \ref{test-func}, there exist a solution $u_n\in C([0,T];L^2(\Omega))\cap L_w^1(0,T;BV(\Omega))$ with $u_n'\in L^2((0,T)\times\Omega)$ and a vector field $\z_n\in L^\infty((0,T)\times\Omega;\R^N)$ with $\|\z_n\|_\infty\le 1$ and $\Div\z_n\in L^2((0,T)\times\Omega)$ and Proposition \ref{test-func0} guarantees that $u_n\in L^\infty((0,T)\times\Omega)$. In this case, Anzellotti's theory applies (so that a Green's formula holds) as well as,  for almost every $t \in (0,T)$, 
\begin{align}
& u_n'(t)=\Div \z_n(t)+f_n(t) \text{ in the sense of distributions},\label{cond_n0}\\
& (\z_n(t),Du_n(t))=|Du_n(t)| \text{ as measures},\label{cond_n1}\\
& [\z_n(t),\nu]\in \sg(-u_n(t))\;\h\text{-a.e. on }\partial\Omega\,.\label{cond_n2}
\end{align}

The next step is to find a function $u$ as a limit of the sequence of approximate solutions $u_n$.

Taking the test function $T_k(u_n(t)-u_m(t))$ in \eqref{cond_n0} with $f_n, u_n^0$ and also with data $f_m, u_m^0$ we get, by Proposition \ref{Prop.Generalizada},
\begin{multline*}
\left(\int_\Omega J_k (u_n(t)-u_m(t))\,dx\right)' = \int_\Omega T_k (u_n(t)-u_m(t))(u_n(t)-u_m(t))'\,dx \\
 = -\int_\Omega (\z_n(t)-\z_m(t), DT_k(u_n(t)-u_m(t)))+\intp T_k(u_n(t)-u_m(t))\Big([\z_n(t),\nu]-[\z_m(t),\nu]\Big)\,d\h\\
 +\int_\Omega (f_n(t)-f_m(t))\, T_k(u_n(t)-u_m(t))\,dx\,,
\end{multline*}
being $J_k(s)$ the function defined in \eqref{def_Jk}.

Once we have dropped non-positive terms on the right hand side and have integrated in $(0,t)$, the previous equality becomes
\begin{multline*}
\int_\Omega J_k(u_n(t) -u_m(t))\,dx \\
 \le \int_\Omega J_k(u_n^0-u_m^0)\,dx + \int_0^t\int_\Omega (f_n(s)-f_m(s))\, T_k(u_n(s)-u_m(s))\,dx\,ds \\
 \le k \int_\Omega |u_n^0-u_m^0|\,dx + k \int_0^T\int_\Omega |f_n(t)-f_m(t)|\,dx\,dt\,,
\end{multline*}
since $J_k(s)\le k|s|$ and $|T_k(s)|\le k$. Finally, dividing by $k$ and passing to the limit as $k$ goes to $0$, we obtain
\begin{equation*}
\int_\Omega|u_n(t)-u_m(t)|\,dx \le \int_\Omega |u_n^0-u_m^0|\,dx + \intT  |f_n(t)-f_m(t)|\,dx\,dt\,,
\end{equation*}
and so $\{u_n\}$ is  a Cauchy sequence in $L^\infty(0,T;L^1(\Omega))$. This fact implies that there exists $u\in C(0,T;L^1(\Omega))$ such that
\begin{align}
& u_n\longrightarrow u \quad\text{ in } L^\infty(0,T;L^1(\Omega))\,,\label{conv_1}\\
& u_n\longrightarrow u \quad\text{ in } L^1((0,T)\times\Omega)\,,\label{conv_2}\\
& u_n(t)\longrightarrow u(t) \quad\text{ in } L^1(\Omega)\,, \quad t\in (0,T)\,,\label{conv_3}\\
& u_n(t,x)\longrightarrow u(t,x) \quad\text{ a.e. in } (0,T)\times\Omega\,.\label{conv_4}
\end{align}

We now verify that the function $u$ satisfies all the conditions of Definition \ref{entropy}.

\smallskip

{\bf Condition (i).-}
We take the test function $T_k(u_n(t))$ in \eqref{cond_n0} and, since Green's formula holds, we deduce
\begin{multline*}
\int_\Omega T_k(u_n(t))\,u_n'(t)\,dx \\
= -\int_\Omega(\z_n(t),DT_k(u_n(t)))+\intp T_k(u_n(t))[\z_n(t),\nu]\,d\h+\int_\Omega f_n(t)\,T_k(u_n(t))\,dx\,.
\end{multline*}
Moreover, due to properties \eqref{cond_n1} and \eqref{cond_n2} and Proposition \ref{Prop.Generalizada}, integrating over $(0,t)$ (with $0<t<T$) it becomes
\begin{multline*}
\int_\Omega J_k(u_n(t))\,dx +\int_0^t\Big[\int_\Omega |DT_k(u_n(s))|+\intp |T_k(u_n(s))|\,d\h\Big]\,ds\\
= \int_\Omega J_k(u_n^0)\,dx+\intt f_n(s)\,T_k(u_n(s))\,dx\,ds\\
\le k\left\{\int_\Omega |u_n^0|\,dx+\intT |f_n(s)|\,dx\,ds\right\}\le k\,C\,,
\end{multline*}
which implies that 
\begin{equation*}
\int_0^T\Big[\int_\Omega |DT_k(u_n(s))|+\intp |T_k(u_n(s))|\,d\h\Big]\,ds  \le k\,C\,.
\end{equation*}
That is, every $\{T_k(u_n)\}_n$ is bounded in $L^1(0,T;BV(\Omega))$. 
It follows from the lower semicontinuity of the $BV$-norm with respect to the $L^1$-convergence and Fatou's Lemma that
\begin{multline*}
\int_0^T\Big[\int_\Omega |DT_k(u(s))|+\intp |T_k(u(s))|\,d\h\Big]\,ds 
\\
 \le \int_0^T\liminf_{n\to\infty}\Big[\int_\Omega |DT_k(u_n(s))|+\intp |T_k(u_n(s))|\,d\h\Big]\,ds 
 \\
  \le \liminf_{n\to\infty}\int_0^T\Big[\int_\Omega |DT_k(u_n(s))|+\intp |T_k(u_n(s))|\,d\h\Big]\,ds \,.
\end{multline*}
 As a consequence, $T_k(u(t))\in BV(\Omega)$ for almost every $t\in (0,T)$ and, by \cite[Lemma 5.19]{ACM}, the function $t\mapsto \int_\Omega |DT_k(u(t))|$ is measurable and $T_k(u)\in L_w^1(0,T;BV(\Omega))$.

\smallskip

{\bf Condition (ii).-}
With regard to the vector field $\z$, its existence follows directly from the uniform bound $\|\z_n\|_\infty\le 1$. Indeed, passing to a subsequence if necessary, there exists $\z\in L^\infty((0,T)\times\Omega;\R^N)$ such that $\|\z\|_\infty\le 1$ and $\z_n\estrella\z$ in $L^\infty((0,T)\times\Omega;\R^N)$.

To show that the sequence $\{\Div\z_n\}$ is bounded in the space $L^1(0,T;BV(\Omega)\cap L^\infty(\Omega))^*$ we follow the argument in \cite[Theorem 4.1, Step 6]{LS} but with $v\in L^1(0,T;BV(\Omega)\cap L^\infty(\Omega))$. Hence, we find a subnet such that $\Div\z_\alpha\estrella\Div \z$ in $L^1(0,T;BV(\Omega)\cap L^\infty(\Omega))^*$ (here $\Div \z$ denotes a suitable extension of the distributional gradient of $\z$).

Moreover, we also define
\begin{equation*}
\xi = \Div\z + f\; \in\; L^1(0,T;BV(\Omega)\cap L^\infty(\Omega))^*+ L^1((0,T)\times\Omega)\,,
\end{equation*}
and working as in \cite[Theorem 4.1, Step 7]{LS} but with test functions $v\in L^1(0,T;BV(\Omega)\cap L^\infty(\Omega))\cap L^\infty((0,T)\times\Omega)$, we prove that
\begin{equation*}
u_\alpha'\estrella \xi \;\in \; L^1(0,T;BV(\Omega)\cap L^\infty(\Omega))^*+L^1((0,T)\times\Omega)\,.
\end{equation*}

On the other hand, \cite[Theorem 4.1, Step 8]{LS} implies that, for almost every $t \in (0,T)$, the equation holds in the distributional sense:
\begin{equation*}
\left(\int_\Omega \omega\,u(t)\,dx\right)' =\ldoble \xi, \omega \rdoble_\Omega= \int_\Omega \z(t)\cdot\nabla \omega\,dx + \int_\Omega f(t)\,\omega\,dx \quad \forall \omega\in
C_0^\infty(\Omega)\,,
\end{equation*}
while \cite[Theorem 4.1, Step 9]{LS} shows that $(\z(t),Dv)$  is a Radon measure for all $v\in BV(\Omega)\cap L^\infty(\Omega)$. This completes the verification of condition (ii) in Definition \ref{entropy}. 

\smallskip

{\bf Condition (iii).-}
Arguing as in \cite[Theorem 4.1, Steps 10-12]{LS} we can define $[\z(t),\nu]\in L^\infty(\partial\Omega)$ with $\|[\z(t),\nu]\|_\infty\le1$ and such that a Green's formula holds:
\begin{equation*}
\langle\,\Div\z(t),v\,\rangle_\Omega+\int_\Omega \z(t)\cdot\nabla v \,dx =\intp v[\z(t),\nu]\,d\h
\end{equation*}
for all $v\in BV(\Omega)\cap L^\infty(\Omega) $. 
Furthermore, we may show that $\xi$ is the time derivative of $u$ in the sense of Definition \ref{def_deriv_temporal} following the argument of \cite[Theorem 4.1, Step 13]{LS}.

To conclude the verification of condition (iii) in Definition \ref{entropy}, we must prove that the boundary condition (iii) \textit{(2)} holds. To this end, set $k>0$ and let $\eta\in C_0^\infty(0,T)$ be non-negative. For every $\delta>0$ small enough,  define
\begin{equation*}
\Psi_\delta(t)=\frac1{\delta}\int_{t-\delta}^t\eta(s) T_k(u(s))\, ds\,,
\end{equation*}
that is a Pettis integral as a consequence of \cite[Lemmas 3-4]{ABCM}.
On the other hand, observe that the integrals
\begin{equation*}
 \int_0^T \langle\, \Div \z(t),\Psi_\delta(t)\,\rangle_\Omega\,dt + \int_0^T\int_\Omega f(t) \Psi_\delta(t)\,dx\,dt
\end{equation*}
are well-defined since $\Div \z(t) \in L^1(0,T;BV(\Omega)\cap L^\infty(\Omega))^*$,  $\Psi_\delta\in L^\infty(0,T;BV(\Omega)\cap L^\infty(\Omega))$ and $f\in L^1((0,T)\times\Omega)$. Hence the integral
\begin{equation}\label{tdz}
\int_0^T\ldoble\, \xi(t), \Psi_\delta(t)\rdoble_\Omega\, dt
\end{equation}
is also well-defined. To compute \eqref{tdz}, apply that $\xi$ is the time derivative of $u$ to deduce
\begin{align*}
\int_0^T\ldoble \xi(t), \Psi_\delta (t) \rdoble_\Omega\, dt&=-\frac1{\delta}
\int_0^T \int_\Omega \big(\eta(t)T_k(u(t))-\eta(t-\delta) T_k(u(t-\delta))\big)u(t) \, dx\, dt\\
&=-\frac1{\delta}\int_0^T \eta(t)\int_\Omega T_k(u(t)) \, u(t)\, dx \, dt+\frac1{\delta}\int_0^T \eta(t-\delta)\int_\Omega T_k(u(t-\delta))\, u(t)\, dx\, dt\\
&=\frac1{\delta} \int_0^T \eta(t) \int_\Omega T_k(u(t))\,\big(u(t+\delta)-u(t)\big)\, dx\, dt\,.
\end{align*}
The convexity of the real function $J_k$ implies
\begin{equation*}
T_k(u(t))\big(u(t+\delta)-u(t)\big)\le J_k(u(t+\delta))-J_k(u(t))\,.
\end{equation*}
Hence
\begin{multline*}
\int_0^T\ldoble \xi(t), \Psi_\delta (t) \rdoble_\Omega\, dt\le \frac1{\delta} \int_0^T\int_\Omega \eta(t)  \Big(J_k(u(t+\delta))-J_k(u(t))\Big)\, dx\, dt\\
=\frac1{\delta} \int_0^T\int_\Omega  \Big( \eta(t-\delta)J_k(u(t))-\eta(t) J_k(u(t))\Big)\, dx\, dt=\frac1{\delta} \int_0^T\big( \eta(t-\delta)-\eta(t)\big)\int_\Omega  J_k(u(t))\, dx\, dt\,.
\end{multline*}
Next, we will compute \eqref{tdz} by means of Green's formula
\begin{multline}\label{ec:15.0}
-\frac1{\delta}\int_0^T\big(\eta(t-\delta)-\eta(t)\big)\int_\Omega  J_k(u(t))\, dx\, dt\le -\int_0^T\ldoble \,\xi(t), \Psi_\delta(t)\,\rdoble_\Omega\, dt\\
=- \int_0^T  \frac1{\delta}\int_{t-\delta}^t\eta(s)\Big[ \langle \Div \z(t), T_k(u(s))\rangle_\Omega+ \int_\Omega f(t)T_k(u(s))\, dx\Big]\, ds\, dt\\
=\int_0^T  \frac1{\delta}\int_{t-\delta}^t\eta(s) \Big[\int_\Omega (\z(t), DT_k(u(s)))-\int_{\partial\Omega}T_k(u(s))[\z(t),\nu]\, d\mathcal H^{N-1}-\int_\Omega T_k(u(s)) f(t)\, dx\Big]\, ds\, dt\\
\le  \int_0^T  \frac1{\delta}\int_{t-\delta}^t\eta(s) \Big[\int_\Omega |DT_k(u(s))|-\int_{\partial\Omega}T_k(u(s))[\z(t),\nu]\, d\mathcal H^{N-1}-\int_\Omega T_k(u(s)) f(t)\, dx\Big]\, ds\, dt\,.
\end{multline}
Having in mind \cite[Lemma A3 and Corollary A4]{LS}, we let $\delta$ go to 0 to obtain
\begin{multline}\label{ec:15.1}
\int_0^T \eta'(t)\int_\Omega J_k(u(t))\,dx\, dt\\
\le \int_0^T  \eta(t) \Big[\int_\Omega |DT_k(u(t))|-\int_{\partial\Omega}T_k(u(t))[\z(t),\nu]\, d\mathcal H^{N-1}-\int_\Omega T_k(u(t)) f(t)\, dx\Big]\, dt\\
\le \int_0^T  \eta(t) \Big[\int_\Omega |DT_k(u(t))|+\int_{\partial\Omega}|T_k(u(t))|\, d\mathcal H^{N-1}-\int_\Omega T_k(u(t)) f(t)\, dx\Big]\, dt\,.
\end{multline}
On the other hand, taking $\eta(t)T_k(u_n(t))$ as test function in \eqref{cond_n0}, it yields
\begin{multline*}
-\int_0^T \eta'(t)\int_\Omega J_k(u_n(t))\, dx\, dt+\int_0^T \eta(t) \Big[\int_\Omega |DT_k(u_n(t))|+\int_{\partial\Omega}|T_k(u_n(t))|\, d\mathcal H^{N-1}\Big]\,dt\\
=\int_0^T\eta(t)\int_\Omega T_k(u_n(t))f_n(t)\, dx\, dt\,.
\end{multline*}
To pass to the limit in the first term we need that $J_k(u_n)\to J_k(u)$ strongly in $C([0,T];L^1(\Omega))$. This fact is a consequence of the inequality $|J_k(u_n)-J_k(u)|\le k|u_n-u|$ (which is easily derived from the Mean Value Theorem) on account of  $u_n\to u$ strongly in $C([0,T];L^1(\Omega))$. The second term may be handled by the lower semicontinuity of the BV-norm, while the right hand side is straightforward. Letting $n$ go to infinity, we get
\begin{multline*}
-\int_0^T \eta'(t)\int_\Omega J_k(u(t))\, dx\, dt+\int_0^T \eta(t) \left[\int_\Omega |DT_k(u(t))|+\int_{\partial\Omega}|T_k(u(t))|\, d\mathcal H^{N-1}\right]dt\\
\le \int_0^T\eta(t)\int_\Omega T_k(u(t)) f(t)\, dx\, dt\,.
\end{multline*}
Finally, having in mind \eqref{ec:15.1}, it implies
\begin{align}\label{ec:15.2}
\int_0^T \eta'(t)&\int_\Omega J_k(u(t))\,dx\, dt\\
\notag &\le \int_0^T  \eta(t) \Big[\int_\Omega |DT_k(u(t))|-\int_{\partial\Omega}T_k(u(t))[\z(t),\nu]\, d\mathcal H^{N-1}-\int_\Omega T_k(u(t)) f(t)\, dx\Big]\, dt\\
\notag &\le \int_0^T  \eta(t) \Big[\int_\Omega |DT_k(u(t))|+\int_{\partial\Omega}|T_k(u(t))|\, d\mathcal H^{N-1}-\int_\Omega T_k(u(t)) f(t)\, dx\Big]\, dt\\
\notag &\le\int_0^T \eta'(t)\int_\Omega J_k(u(t))\,dx\, dt\,.
\end{align}

It follows from \eqref{ec:15.2} that
\begin{equation*}
-\int_0^T  \eta(t) \int_{\partial\Omega}T_ku(t)[\z(t),\nu]\, d\mathcal H^{N-1}\, dt=\int_0^T  \eta(t) \int_{\partial\Omega}|T_ku(t)|\, d\mathcal H^{N-1}\, dt\,.
\end{equation*}
Since this identity holds for every $k>0$ and every non-negative $\eta\in C_0^\infty(\Omega)$, we get
\begin{equation*}
\int_{\partial\Omega}\big(|u(t)|+u(t)[\z(t),\nu]\big)\, d\mathcal H^{N-1}=0
\end{equation*}
for almost all $t\in(0,T)$, which implies the boundary condition (iii) (2).

Going back to \eqref{ec:15.2}, we also get
\[\int_0^T \eta'(t)\int_\Omega J_k(u(t))\,dx\, dt=\int_0^T  \eta(t) \Big[\int_\Omega |DT_ku(t)|+\int_{\partial\Omega}|T_ku(t)|\, d\mathcal H^{N-1}-\int_\Omega T_ku(t) f(t)\, dx\Big]\, dt\,.\]
The arbitrariness of $\eta$ leads to identity \eqref{u-test}.

\smallskip

{\bf Condition (iv).-}
Let $\phi\in L_w^1(0,T;BV(\Omega))\cap H^{1}(0,T;L^2(\Omega))\cap L^\infty((0,T)\times\Omega)$ be such that there exists $\z_\phi\in L^\infty((0,T)\times\Omega;\R^N)$ with $\|\z_\phi\|_\infty\le 1$ and $\Div\z_\phi\in L^2((0,T)\times\Omega)$. In addition, it holds $(\z_\phi(t),D\phi(t))=|D\phi(t)|$ and $[\z_\phi(t),\nu]\in\sg(-\phi(t))$ a.e. in $(0,T)$. Fixed $0<t<T$, thanks to Proposition \ref{ent-reg}, we obtain
\begin{multline}\label{sol-n}  
\left(\int_\Omega J_k(u_n(t)-\phi(t))\,dx\right)' + \int_\Omega \phi'(t)\,T_k(u_n(t)-\phi(t))\,dx \\
-\int_\Omega T_k(u_n(t)-\phi(t))\,\Div\z_\phi(t)\,dx \le \int_\Omega f_n(t)\,T_k(u_n(t)-\phi(t))\,dx\,,
\end{multline}
since $u_n$ is an entropy solution to problem \eqref{Pn-exist}.

It remains to pass to the limit as $n \to \infty$. To begin with, fix $0<s<t<T$ and integrate \eqref{sol-n} over $(s,t)$:
\begin{multline}\label{ec:exis}
\int_\Omega J_k(u_n(t)-\phi(t))\,dx-\int_\Omega J_k(u_n(s)-\phi(s))\,dx +\int_s^t \int_\Omega \phi'(\tau)\,T_k(u_n(\tau)-\phi(\tau))\,dx\, d\tau \\
-\int_s^t \int_\Omega T_k(u_n(\tau)-\phi(\tau))\,\Div\z_\phi(\tau)\,dx\, d\tau \le \int_s^t \int_\Omega f_n(\tau)\,T_k(u_n(\tau)-\phi(\tau))\,dx\, d\tau\,.
\end{multline}
Recalling \eqref{conv_4} and taking into account that
\begin{align*}
& \big|J_k(u_n(t)-\phi(t))\big|\le k\big(|u_n(t)|+|\phi(t)|\big)\,,\\
& \big|f_n(t)\,T_k(u_n(t)-\phi(t))\big|\le k|f_n(t)|\,,
\end{align*}
(notice that the right hand sides converge in $L^1((0,T)\times\Omega)$  to $k\big(|u(t)|+|\phi(t)|\big)$ and $ k|f(t)|$, respectively), every term of \eqref{ec:exis} converges and so we may take the limits to get
\begin{multline*}  
\int_\Omega J_k(u(t)-\phi(t))\,dx-\int_\Omega J_k(u(s)-\phi(s))\,dx +\int_s^t \int_\Omega \phi'(\tau)\,T_k(u(\tau)-\phi(\tau))\,dx\, d\tau \\
-\int_s^t \int_\Omega T_k(u(\tau)-\phi(\tau))\,\Div\z_\phi(\tau)\,dx\, d\tau \le \int_s^t \int_\Omega f(\tau)\,T_k(u(\tau)-\phi(\tau))\,dx\, d\tau\,.
\end{multline*}
Since this inequality holds for every $0<s<t<T$, we finally conclude that
\begin{multline*}
\left(\int_\Omega J_k(u(t)-\phi(t))\,dx\right)' + \int_\Omega \phi'(t)\,T_k(u(t)-\phi(t))\,dx \\
-\int_\Omega T_k(u(t)-\phi(t))\,\Div\z_\phi(t)\,dx \le \int_\Omega f(t)\,T_k(u(t)-\phi(t))\,dx
\end{multline*}
holds for almost all $t\in (0,T)$. 
Hence, the proof is complete.
\end{pf}

\begin{Remark}\label{igual-r}\rm
Having in mind \eqref{ec:15.2}, when taking the limit of \eqref{ec:15.0} as $\delta$ goes to 0 we obtain 
\begin{multline*}
\lim_{\delta\to0}\int_0^T  \frac1{\delta}\int_{t-\delta}^t\eta(s) \int_\Omega (\z(t), DT_ku(s))\, ds\, dt\\
=\lim_{\delta\to0}\int_0^T  \frac1{\delta}\int_{t-\delta}^t\eta(s) \int_\Omega |DT_ku(s)|\, ds\, dt=\int_0^T  \eta(t) \int_\Omega |DT_ku(t)|\, dt\,.
\end{multline*}
Appealing to \cite[Lemma A6]{LS}, the above limit leads to
\[\lim_{\delta\to0}\int_0^T\frac1\delta \int_{t-\delta}^t \left| \eta(s) \int_\Omega (\z(t), DT_ku(s))-\eta(t)\int_\Omega |DT_ku(t)|\right|\, ds\, dt =0\,.\]
Thus, in a {\it mean sense}, it leads to 
\[\int_0^T   \eta(t) \int_\Omega (\z(t), DT_ku(t))\, dt =\int_0^T \eta(t)\int_\Omega |DT_ku(t)|\, dt\]
for every non-negative $\eta\in C_0^\infty(0,T)$, so that we should infer that
\begin{equation}\label{igual}
\int_\Omega \big(\z(t), DT_k(u(t))\big)=\int_\Omega|DT_k(u(t))|
\end{equation}
holds for almost every $t\in (0,T)$. Nevertheless, we are not able to prove \eqref{igual} since the convergences involved are not sufficient to achieve this conclusion.

We point out that the identity \eqref{igual} is equivalent to 
\[\ldoble \xi(t), T_k(u(t)) \rdoble_\Omega=\left(\int_\Omega J_k(u(t))\, dx\right)'.\]
\end{Remark}

\section{Uniqueness}

In this section, we address the uniqueness of entropy solutions to problem \eqref{P} when the initial data and the source term are merely integrable functions. Specifically, we prove that the solution obtained as the limit of the approximating solutions in Theorem \ref{Teo-exist} is the unique entropy solution.

\begin{Theorem}\label{Teo-unic}
There exists a unique entropy solution to problem \eqref{P} when $f\in L^1((0,T)\times\Omega)$ and $u_0\in L^1(\Omega) $.
\end{Theorem}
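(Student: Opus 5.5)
Existence is Theorem \ref{Teo-exist}, so only uniqueness must be shown. Let $v$ be an arbitrary entropy solution and $u$ the solution constructed in Theorem \ref{Teo-exist} as the limit of the approximate solutions $u_n$ of \eqref{Pn-exist}. The plan is to show $v=u$ by using the $u_n$ themselves as test functions $\phi$ in condition (iv) of Definition \ref{entropy} for $v$. This avoids doubling variables, in line with the introduction.

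First I check that $\phi=u_n$ is admissible. With $f_n\in L^\infty$ and $u_n^0\in W^{1,1}(\Omega)\cap L^\infty(\Omega)$, Proposition \ref{test-func0} gives $u_n\in L^\infty((0,T)\times\Omega)$. Proposition \ref{test-func} gives $u_n\in L^1_w(0,T;BV(\Omega))\cap H^1(0,T;L^2(\Omega))$ together with a vector field $\z_n$, $\|\z_n\|_\infty\le1$, satisfying $(\z_n(t),Du_n(t))=|Du_n(t)|$ and $[\z_n(t),\nu]\in\sg(-u_n(t))$. Moreover $\Div\z_n=u_n'-f_n\in L^2((0,T)\times\Omega)$. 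So \eqref{ent-for1}--\eqref{ent-for2} hold, and condition (iv) for $v$ yields, for a.e.\ $t$,
\[
\Big(\int_\Omega J_k(v-u_n)\Big)'+\int_\Omega u_n'T_k(v-u_n)+\int_\Omega(\z_n,DT_k(v-u_n))-\intp T_k(v-u_n)[\z_n,\nu]\,d\h\le\int_\Omega fT_k(v-u_n).
\]

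Next I rewrite the middle terms through the equation for $u_n$. Since $\Div\z_n(t)\in L^2(\Omega)$ and $T_k(v(t)-u_n(t))\in BV(\Omega)\cap L^\infty(\Omega)$ (because $T_k(v)\in BV$ and $u_n$ is bounded, so $v-u_n$ is a truncation-type BV function on bounded sets), Anzellotti's Green formula applies. It converts the pairing and boundary terms into $-\int_\Omega T_k(v-u_n)\Div\z_n\,dx$, and this term combines with $\int_\Omega u_n'T_k(v-u_n)$ to give $\int_\Omega f_nT_k(v-u_n)$. The inequality collapses to
\[
\Big(\int_\Omega J_k(v(t)-u_n(t))\,dx\Big)'\le\int_\Omega (f-f_n)T_k(v-u_n)\,dx\le k\|f(t)-f_n(t)\|_{L^1(\Omega)}.
\]

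I then integrate on $(0,t)$ and let $n\to\infty$. Since $J_k(s)\le k|s|$ and $v,u_n\in C([0,T];L^1)$, I get
\[
\int_\Omega J_k(v(t)-u_n(t))\le k\|u_0-u_n^0\|_1+k\|f-f_n\|_{L^1((0,T)\times\Omega)}.
\]
Using the convergence \eqref{conv_1}, I let $n\to\infty$, divide by $k$, and let $k\to0$, using $J_k(s)/k\to|s|$. This gives $\int_\Omega|v(t)-u(t)|=0$ for all $t$.

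The main obstacle is the justification of the middle step. I need that $T_k(v(t)-u_n(t))$ lies in $BV(\Omega)$ when only truncations of $v$ are BV; I would use $T_k(v-u_n)=T_k(T_{k+M}(v)-u_n)$ with $M=\|u_n\|_\infty$. I also need the absolute continuity, near $t=0$, of $t\mapsto\int_\Omega J_k(v-u_n)$ implicit in (iv), so that integrating from $0$ is legitimate. I would handle the latter by integrating on $(s,t)$ first and then letting $s\to0$ using continuity in $L^1$.
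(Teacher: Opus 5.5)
Your proposal is correct and is essentially the paper's proof: you test the entropy inequality for $v$ with $\phi=u_n$, cancel the flux and boundary terms by testing \eqref{cond_n0} with $T_k(v-u_n)$ and using Green's formula, and then let $n\to\infty$ and $k\to0$. The differences are minor: you let $s\to0$ before $n\to\infty$ and bound by the data, whereas the paper first lets $n\to\infty$ on $(s,t)$ and then uses that $t\mapsto\|v(t)-u(t)\|_{L^1(\Omega)}$ is non-increasing; you also make explicit the identity $T_k(v-u_n)=T_k(T_{k+M}(v)-u_n)$, which guarantees $T_k(v(t)-u_n(t))\in BV(\Omega)$ and which the paper leaves implicit.
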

\begin{pf} In order to show uniqueness, we follow the same pattern used in the $p$--Laplacian case (\cite{Pr} and \cite{AMST}).

Let $f\in L^1((0,T)\times\Omega)$ and $u_0\in L^1(\Omega)$, and denote by $u$ and $v$ two entropy solutions to \eqref{P} such that $u$ is obtained as a limit of the approximating problems \eqref{Pn-exist}. Hence, there exist sequences 
\begin{enumerate}
\item[(a)] $\{u_n\}$ in $C([0,T];L^2(\Omega))\cap L_w^1(0,T;BV(\Omega))\cap L^\infty((0,T)\times\Omega)\cap W^{1,1}(0,T;L^2(\Omega))$,
\item[(b)] $\{f_n\}$ in $L^\infty((0,T)\times\Omega)$,
\item[(c)] $\{u_n^0\}$ in $L^\infty(\Omega)\cap W^{1,1}(\Omega)$ such that $u_n(0)=u_n^0$,
\item[(d)] $\{\z_n\}$ in $ L^\infty((0,T)\times\Omega; \R^N)$ satisfying $\|\z_n\|_\infty\le 1$, $\Div\z_n \in L^2((0,T)\times\Omega)$ and conditions (i)--(iii) of Definition \ref{def-reg};
\end{enumerate}
satisfying:
\begin{enumerate}
\item[(e)] $u_n\to u$ in $L^\infty(0,T;L^1(\Omega))$,
\item[(f)] $f_n\to f $ in $L^1((0,T)\times\Omega)$,
\item[(g)] $u_n^0\to u_0 $ in $L^1(\Omega)$.
\end{enumerate}

Taking $\phi=u_n$ in the entropy formulation of $v$ and integrating between $s$ and $t$ (where $0<s<t<T$), it yields
\begin{multline}\label{des-1}
\int_\Omega J_k(v(t)-u_n(t))\,dx - \int_\Omega J_k(v(s)-u_n(s))\,dx + \ints u_n'(\sigma)T_k(v(\sigma)-u_n(\sigma))\,dx\,d\sigma \\
+ \ints \big(\z_{n}(\sigma),DT_k(v(\sigma)-u_n(\sigma))\big)\,d\sigma  - \int_s^t\intp T_k(v(\sigma)-u_n(\sigma))[\z_{n}(\sigma),\nu]\,d \h\,d\sigma  \\
\le \ints f(\sigma)T_k(v(\sigma)-u_n(\sigma))\,dx\,d\sigma\,.
\end{multline}
On the other hand, we choose the test function $T_k(v(\sigma)- u_n(\sigma))$ in problem \eqref{Pn-exist} (actually in \eqref{cond_n0}) and apply Green's formula to get
\begin{multline*}
\ints u_n'(\sigma)\,T_k(v(\sigma)- u_n(\sigma))\,dx\,d\sigma \\
= \ints T_k(v(\sigma)- u_n(\sigma))\Div \z_n(\sigma)\,dx\,d\sigma
+\ints f_n(\sigma)T_k(v(\sigma)- u_n(\sigma))\,dx\,d\sigma\\
 =- \ints \big(\z_n(\sigma),D T_k(v(\sigma)- u_n(\sigma))\big)\,d\sigma
 + \int_s^t\intp T_k(v(\sigma)-u_n(\sigma))[\z_n(\sigma),\nu]\,d\h\,d\sigma\\
+\ints f_n(\sigma)T_k(v(\sigma)- u_n(\sigma))\,dx\,d\sigma\,.
\end{multline*}
In other words:
\begin{multline}\label{des-2}
\ints u_n'(\sigma)\,T_k(v(\sigma)- u_n(\sigma))\,dx\,d\sigma
+\ints \big(\z_n(\sigma),D T_k(v(\sigma)- u_n(\sigma))\big)\,d \sigma\\
 - \int_s^t\intp T_k(v(\sigma)-u_n(\sigma))[\z_n(\sigma),\nu]\,d\h\,d\sigma
 =\ints f_n(\sigma)T_k(v(\sigma)- u_n(\sigma))\,dx\,d\sigma\,.
\end{multline}
Joining \eqref{des-1} and \eqref{des-2}, it implies
\begin{multline*}
\int_\Omega J_k(v(t)-u_n(t))\,dx-\int_\Omega J_k(v(s)-u_n(s))\,dx
\\
\le \ints (f(\sigma)-f_n(\sigma)) T_k(v(\sigma)- u_n(\sigma))\,dx\,d\sigma \,.
\end{multline*}
Taking into account the convergences satisfied by $u_n$ and $f_n$, it is straightforward to let $n$ go to infinity applying Lebesgue's Theorem and  so to arrive at 
\begin{equation*}
\int_\Omega J_k(v(t)-u(t))\,dx-\int_\Omega J_k(v(s)-u(s))\,dx\le0\,.
\end{equation*}
Dividing by $k>0$ and taking the limit as $k$ goes to $0$ it leads to
\begin{equation*}
\int_\Omega |v(t)-u(t)|\,dx\le \int_\Omega |v(s)-u(s)|\,dx \quad \text{ for } 0<s<t\,,
\end{equation*}
which implies that function $t\mapsto \displaystyle\int_\Omega |v(t)-u(t)|\,dx$ is non-increasing. Thus, for all $t\in (0,T)$, it implies
\begin{equation*}
\int_\Omega |v(t)-u(t)|\,dx\le \int_\Omega |v(0)-u(0)|\,dx =0\,,
\end{equation*}
from where we deduce the uniqueness of the solution.
\end{pf}

Since the existence and uniqueness of an entropy solution to problem \eqref{P} has been established for every $T>0$, the existence of a unique global entropy solution to problem \eqref{Pg} follows.

\section{Comparison between solutions}

Having established the well-posedness of problem \eqref{P}, this section is devoted to studying the stability of the solutions. We prove that the distance between two solutions in $C([0,T];L^1(\Omega))$ is controlled by the distance between their respective data. Additionally, we establish a comparison principle.
\begin{Theorem}
Let $u_1 $ and $u_2$ be the entropy solutions corresponding to data $ f_1,f_2 \in L^1((0,T)\times\Omega)$ and $u_0^1, u_0^2\in L^1(\Omega) $, respectively. It holds:
\begin{itemize}
\item[a)] If $u_0^1\le u_0^2 $ and $f_1 \le f_2 $, then $u_1 \le u_2 $.
\item[b)] $ \displaystyle \max_{t\in(0,T)}\|u_1(t)-u_2(t)\|_{L^1(\Omega)} \le \intT |f_1(t)-f_2(t)|\,dx\,dt + \int_\Omega |u_0^1-u_0^2|\,dx$.
\end{itemize}
\end{Theorem}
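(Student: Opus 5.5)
The plan is to reduce both statements to the approximating problems \eqref{Pn-exist} and pass to the limit. By Theorem \ref{Teo-unic}, $u_1$ and $u_2$ coincide with the limits, in $L^\infty(0,T;L^1(\Omega))$ and a.e., of the solutions $u_{1,n}$, $u_{2,n}$ built in Theorem \ref{Teo-exist}. Their data are $f_{i,n}\in L^\infty((0,T)\times\Omega)$ and $u^0_{i,n}\in L^\infty(\Omega)\cap W^{1,1}(\Omega)$, converging in $L^1$ to $f_i$ and $u_0^i$. Since the limit does not depend on the approximating sequence, we are free to choose these approximations with care.

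For part b), repeat the Cauchy estimate from the proof of Theorem \ref{Teo-exist}, now applied to $u_{1,n}$ and $u_{2,n}$. Take $T_k(u_{1,n}(t)-u_{2,n}(t))$ as test function in both equations \eqref{cond_n0} and subtract. Proposition \ref{Prop.Generalizada} identifies the time term as $\big(\int_\Omega J_k(u_{1,n}-u_{2,n})\big)'$. The terms $(\z_{1,n}-\z_{2,n},DT_k(u_{1,n}-u_{2,n}))\ge0$ and $-T_k(u_{1,n}-u_{2,n})[\z_{1,n}-\z_{2,n},\nu]\ge0$ are dropped; their signs follow from \eqref{cond_n1}--\eqref{cond_n2} together with $\|\z_{i,n}\|_\infty\le1$. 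Integrate over $(0,t)$, divide by $k$ and let $k\to0$. This gives
\[
\|u_{1,n}(t)-u_{2,n}(t)\|_{L^1}\le\|u^0_{1,n}-u^0_{2,n}\|_{L^1}+\|f_{1,n}-f_{2,n}\|_{L^1((0,T)\times\Omega)}
\]
for every $t$. Letting $n\to\infty$ with \eqref{conv_1} yields b); the maximum over $t$ exists because $u_i\in C([0,T];L^1(\Omega))$.

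For part a), the same argument is run with the positive part. Test with $T_k^+(s):=T_k(s^+)$ evaluated at $s=u_{1,n}-u_{2,n}$. This is a non-decreasing, bounded, Lipschitz function vanishing at $0$, so Proposition \ref{Prop.Generalizada} applies with primitive $J_k^+(s)=J_k(s^+)$.

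The sign arguments still work in this case. On the interior, split $|D\,\cdot|$ and the pairings on the set where $u_{1,n}>u_{2,n}$, exactly as in the truncation remark of Section 2; this gives $(\z_{1,n}-\z_{2,n},DT_k^+(u_{1,n}-u_{2,n}))\ge0$, using that $DT_k^+(w)$ is a Lipschitz-chain-rule image of $Dw$. On the boundary, where $u_{1,n}>u_{2,n}$ we have $[\z_{1,n},\nu]\le[\z_{2,n},\nu]$ by \eqref{cond_n2}, since $\sg(-\cdot)$ is non-increasing.

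This yields
\[
\int_\Omega J_k^+(u_{1,n}(t)-u_{2,n}(t))\le\int_\Omega J_k^+(u^0_{1,n}-u^0_{2,n})+\int_0^t\!\!\int_\Omega (f_{1,n}-f_{2,n})T_k^+(u_{1,n}-u_{2,n}).
\]
Dividing by $k$ and letting $k\to0$ gives
\[
\|(u_{1,n}(t)-u_{2,n}(t))^+\|_1\le\|(u^0_{1,n}-u^0_{2,n})^+\|_1+\|(f_{1,n}-f_{2,n})^+\|_{L^1}.
\]
Pass to the limit in $n$. The right-hand side tends to $\|(u_0^1-u_0^2)^+\|_1+\|(f_1-f_2)^+\|_1=0$, hence $u_1\le u_2$. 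Alternatively, the approximations can be chosen ordered, for instance by truncation $f_{i,n}=T_n(f_i)$ and mollified truncations of the initial data, which makes the right-hand side vanish already at level $n$.

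The main obstacle is the interior sign claim for the $T_k^+$ test in the BV setting: $(\z_{1,n}-\z_{2,n},Dw^+)\ge0$ with $w=T_k(u_{1,n}-u_{2,n})$. Here $(\z_{i,n},Du_{i,n})=|Du_{i,n}|$ is known only for $u_{i,n}$ itself, not for the difference. The plan is to argue as in Proposition \ref{ent-reg}: write $Dw^+$ through the chain rule on the superlevel set, and use $(\z_{1,n},Du_{1,n})=|Du_{1,n}|\ge(\z_{2,n},Du_{1,n})$, and symmetrically for $u_{2,n}$, localized via the coarea formula to the level sets of $u_{1,n}-u_{2,n}$. If that localization proves delicate, a fallback is to apply b) to the pair $(u_2,\max(u_1,u_2))$; this requires uniqueness of entropy solutions with data $\max$-type, so the direct positive-part argument is preferred.
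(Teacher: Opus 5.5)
Your proposal is correct and follows the paper's proof. You test the approximating problems with $T_k\big((u_{1,n}-u_{2,n})^+\big)$, resp.\ $T_k(u_{1,n}-u_{2,n})$, apply Proposition \ref{Prop.Generalizada}, drop the signed terms, and let $k\to0$ and then $n\to\infty$. The paper uses ordered approximations $f_n^1\le f_n^2$, $u_n^{0,1}\le u_n^{0,2}$ instead, which your $(\cdot)^+$ bound on the right-hand side makes unnecessary.

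The interior sign you single out is simply asserted in the paper. It is the same issue for $T_k(u_{1,n}-u_{2,n})$ in your part b), so you should not treat that case as immediate. It follows from $(\z_{1,n}-\z_{2,n},D(u_{1,n}-u_{2,n}))=|Du_{1,n}|+|Du_{2,n}|-(\z_{1,n},Du_{2,n})-(\z_{2,n},Du_{1,n})\ge0$. Combine this with Anzellotti's result: for a non-decreasing Lipschitz $T$, the Radon--Nikodym density of $(\z,D(T\circ w))$ with respect to $|D(T\circ w)|$ agrees $|D(T\circ w)|$-a.e.\ with that of $(\z,Dw)$ with respect to $|Dw|$. So neither a coarea localization nor the $\max$-type fallback is needed.
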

\begin{pf}
a) $\quad$
Let $u_i$ be an entropy solution to problem \eqref{P} with data $f_i\in L^1((0,T)\times\Omega)$ and $u_i^0\in L^1(\Omega)$, $i=1,2$. 
Now, consider the sequences $\{f_n^i\}\subset L^\infty((0,T)\times\Omega)$ and $\{u_n^{0,i}\}\subset L^\infty(\Omega)\cap H^1(\Omega)$ with
\begin{equation}\label{conv_comparacion1}
f_n^i\longrightarrow f_i \quad\text{in } L^1((0,T)\times\Omega) \qquad\text{ and }\qquad u_n^{0,i}\longrightarrow u_0^i \quad\text{in } L^1(\Omega)\,,
\end{equation}
such that $f_n^1\le f_n^2$ and $u_n^{0,1} \le u_n^{0,2}$ for every $n\in\N$. Moreover, due to Theorem \ref{Teo-exist}, we also know that 
\begin{equation}\label{conv_comparacion2}
u_n^i \longrightarrow u_i \quad\text{in } L^1((0,T)\times\Omega).
\end{equation}

Using the test function $T_k((u_n^1(t)-u_n^2(t))^+)$ in the approximate problem \eqref{Pn-exist} corresponding to data $f_n^1$ and $u_n^{0,1}$ and also in \eqref{Pn-exist} with data $f_n^2$ and $u_n^{0,2}$, and applying Proposition \ref{Prop.Generalizada}, we obtain
\begin{multline*}
\left(\int_\Omega J_k\big( (u_n^1(t)-u_n^2(t))^+\big)\,dx \right)'=\int_\Omega (u_n^1(t)-u_n^2(t))'\,T_k((u_n^1(t)-u_n^2(t))^+)\,dx  \\=
-\int_\Omega \big(\z_n^1(t)-\z_n^2(t),DT_k((u_n^1(t)-u_n^2(t))^+)\big) + \intp T_k((u_n^1(t)-u_n^2(t))^+)\big([\z_n^1(t),\nu]-[\z_n^2(t),\nu]
\big)\,d\h \\
+ \int_\Omega (f_n^1(t)-f_n^2(t))T_k((u_n^1(t)-u_n^2(t))^+)\,dx
\end{multline*}
which, dropping the non-positive terms and integrating over $(0,t)$, becomes
\begin{multline*}
\int_\Omega J_k\big((u_n^1(t)-u_n^2(t))^+\big)\,dx\\
 \le \int_\Omega J_k\big((u_n^1(0)-u_n^2(0))^+\big)\,dx + \intt (f_n^1(s)-f_n^2(s))T_k((u_n^1(t)-u_n^2(t))^+)\,dx\,ds 
\le 0\,.
\end{multline*}
Dividing by $k$ and letting $k\to0$, we get
\begin{equation*}
\int_\Omega (u_n^1(t)-u_n^2(t))^+ \,dx  \le 0 \,.
\end{equation*}
Using the convergences in $L^1((0,T)\times\Omega)$ of the solutions $u_n^1$ and $u_n^2$ (see \eqref{conv_comparacion2}) we get 
\begin{equation*}
\int_\Omega (u_1(t)-u_2(t))^+\,dx\le 0
\end{equation*}
which implies that $(u_1(t)-u_2(t))^+=0$, and so $u_1(t) \le u_2(t)$ for almost every $t\in(0,T)$.

\bigskip
b) $\quad$
The second part of the theorem is established by choosing the test function $T_k(u_n^1(t)-u_n^2(t))$ in problems with data $f_n^i$ and $u_n^{0,i}$, $i=1,2$. Making the same computations as in part a), we obtain
\begin{equation*}
\int_\Omega J_k(u_n^1(t)-u_n^2(t))\,dx
\le k \left\{\int_\Omega |u_n^{0,1}-u_n^{0,2}|\,dx+\intT  |f_n^1(s)-f_n^2(s)|\,dx\,ds\right\} \,.
\end{equation*}
Dividing by $k$ and passing to the limit as $k\to0$, we deduce
\begin{equation*}
\int_\Omega |u_n^1(t)-u_n^2(t)|\,dx \le  \int_\Omega |u_n^{0,1}-u_n^{0,2}|\,dx +\intT  |f_n^1(s)-f_n^2(s)|\,dx\,ds \,,
\end{equation*}
from where the result is inferred thanks to convergences \eqref{conv_comparacion1} and \eqref{conv_comparacion2}.
\end{pf}

As a consequence of the previous result, we obtain an inequality that provides a control on the solution $u$ in terms of its $L^1$-norm.
\begin{Corollary}
Let $u $ be the entropy solution to problem \eqref{P} corresponding to data $f\in L^1((0,T)\times\Omega)$ and $u_0\in L^1(\Omega)$. Then,  the following estimate holds:
\begin{equation*}
\max_{t\in(0,T)}\|u(t)\|_{L^1(\Omega)} \le \|u_0\|_{L^1(\Omega)}+\int_0^T \|f(t)\|_{L^1(\Omega)}\,dt \,.
\end{equation*}
\end{Corollary}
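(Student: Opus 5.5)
The estimate follows from part b) of the preceding comparison theorem. We compare $u$ with the solution associated to zero data. Take $f_1=f$, $u_0^1=u_0$ and $f_2\equiv0$, $u_0^2\equiv0$. The key observation is that the entropy solution with vanishing data is $u_2\equiv0$. By the uniqueness Theorem \ref{Teo-unic}, it suffices to check that $0$ is an entropy solution, or simply that the approximating scheme of Theorem \ref{Teo-exist} produces it.

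To verify this directly, choose the approximating data $f_n^2=0$ and $u_n^{0,2}=0$. Then $u_n^2\equiv0$ solves \eqref{Pn-exist} in the sense of Definition \ref{def-reg}, with vector field $\z_n^2\equiv0$:
\begin{itemize}
\item condition (i) reads $0=\Div 0$;
\item condition (ii) reads $(0,D0)=0=|D0|$;
\item condition (iii) holds because $0\in\sg(0)=[-1,1]$.
\end{itemize}
By the uniqueness part of Proposition \ref{test-func}, this is the approximate solution. Passing to the limit gives $u_2=0$, which by Theorem \ref{Teo-unic} is the entropy solution with null data.

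Applying part b) of the comparison theorem to $u_1=u$ and $u_2=0$ yields
\[
\max_{t\in(0,T)}\|u(t)\|_{L^1(\Omega)}\le \intT|f(t)|\,dx\,dt+\int_\Omega|u_0|\,dx,
\]
and $\intT|f|\,dx\,dt=\int_0^T\|f(t)\|_{L^1(\Omega)}\,dt$ by Fubini. The maximum is meaningful because $u\in C([0,T];L^1(\Omega))$ by Definition \ref{entropy}(i).

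No real obstacle is expected. The only point needing care is identifying the zero-data solution as $0$. Alternatively, one can bypass this entirely by rederiving the estimate from the proof of Theorem \ref{Teo-exist}. There, integrating the identity for $T_k(u_n)$ gives $\int_\Omega J_k(u_n(t))\,dx\le k\big(\|u_n^0\|_1+\|f_n\|_{L^1}\big)$. Dividing by $k$ and letting $k\to0$ (since $J_k(s)/k\to|s|$) bounds $\|u_n(t)\|_1$, and the convergence \eqref{conv_1} lets one pass to the limit $n\to\infty$.
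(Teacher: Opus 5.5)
Your proposal is correct and follows the paper's route: the paper states the corollary, without proof, as a consequence of the comparison theorem, and your argument applies part b) with $f_2\equiv0$, $u_0^2\equiv0$, whose entropy solution is $u_2\equiv0$. Your explicit check that $0$ (with $\z\equiv0$) solves the approximating problems, and hence by uniqueness is the zero-data entropy solution, fills in the detail the paper leaves implicit.
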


\section{Regularity}

This section establishes $L^r$-regularity for the entropy solutions. We demonstrate that if $f\in L^1(0,T;L^r(\Omega))$ and $u_0\in L^r(\Omega)$ for some $1<r<2$, then the solution not only belongs to $C([0,T];L^1(\Omega))$,  but also lies in $C([0,T];L^r(\Omega))$. This result is consistent with the regularity obtained in \cite{LS} (square-integrable data yields square-integrable solutions) and the existence result of the present work (merely summable data lead to merely summable solutions).

\begin{Theorem}\label{Teo-reg}
Let $1<r<2$. Assume  that  $u_0\in L^r(\Omega)$ and $f\in L^1(0,T;L^r(\Omega))$. Then, the unique entropy solution $u$ to problem \eqref{P} belongs to $C([0,T];L^r(\Omega))$.
\end{Theorem}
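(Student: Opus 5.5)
The plan is to work with the approximating problems \eqref{Pn-exist} used in Theorem \ref{Teo-exist}, choosing the approximations so that they also converge in the stronger norm. Concretely, take $f_n\in L^\infty((0,T)\times\Omega)$ with $f_n\to f$ in $L^1(0,T;L^r(\Omega))$ (for instance $f_n=T_n(f)$) and $u_n^0\in L^\infty(\Omega)\cap W^{1,1}(\Omega)$ with $u_n^0\to u_0$ in $L^r(\Omega)$. By Theorem \ref{Teo-unic} the limit of the corresponding $u_n$ is the entropy solution $u$. So it suffices to show that $\{u_n\}$ is a Cauchy sequence in $C([0,T];L^r(\Omega))$. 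Since $u_n\to u$ in $C([0,T];L^1(\Omega))$, the limit then coincides with $u$ and belongs to $C([0,T];L^r(\Omega))$.

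The key computation is an $L^r$-contraction estimate for the regular problems. Since $u_n,u_m$ are bounded (Proposition \ref{test-func0}), $u_n,u_m\in H^1(0,T;L^2(\Omega))$ and $1<r<2$, I would apply Proposition \ref{Prop.Generalizada} to $w=u_n-u_m$ with
\[
\varphi_\epsilon(s)=\sg(s)\,\big(\min\{|s|,M\}+\epsilon\big)^{r-1}-\sg(s)\,\epsilon^{r-1},
\]
where $M$ exceeds $\|u_n\|_\infty+\|u_m\|_\infty$. This $\varphi_\epsilon$ is non-decreasing, bounded, Lipschitz and vanishes at $0$; it is a regularization of $|s|^{r-2}s$, which itself is not Lipschitz at $0$ when $r<2$. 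Let $\psi_\epsilon$ be its primitive. Testing \eqref{cond_n0} for $u_n$ and $u_m$ with $\varphi_\epsilon(w(t))\in BV(\Omega)\cap L^\infty(\Omega)$ and using Green's formula, I expect to get
\[
\frac{d}{dt}\int_\Omega \psi_\epsilon(w(t))\,dx\le \int_\Omega (f_n-f_m)\,\varphi_\epsilon(w(t))\,dx.
\]
The diffusion and boundary terms have the right sign by monotonicity. Indeed, $(\z_n-\z_m,D\varphi_\epsilon(w))\ge0$ because $\varphi_\epsilon$ is non-decreasing, and this follows from \eqref{cond_n1} together with the chain rule for the pairing. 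The boundary term is $\le0$ by \eqref{cond_n2} and the sign of $\varphi_\epsilon$.

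Next I bound the right-hand side by Hölder: it is at most $\|f_n-f_m\|_r\,\|\varphi_\epsilon(w)\|_{r'}$. Letting $\epsilon\to0$ (monotone and dominated convergence, with $M$ fixed), this gives
\[
\frac{d}{dt}\frac1r\|w(t)\|_r^r\le \|f_n(t)-f_m(t)\|_r\,\|w(t)\|_r^{r-1}.
\]
The standard trick, applied to $(\|w\|_r^r+\delta)^{1/r}$, then yields
\[
\|w(t)\|_r\le \|u_n^0-u_m^0\|_r+\int_0^T\|f_n-f_m\|_r\,dt.
\]
I would do this in integrated form over $(0,t)$ to avoid differentiating the norm. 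This is the Cauchy property in $L^\infty(0,T;L^r(\Omega))$. Each $u_n$ is in $C([0,T];L^r(\Omega))$ because it lies in $C([0,T];L^2(\Omega))$ and $r<2$, so the limit is continuous with values in $L^r(\Omega)$.

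I expect the main obstacle to be the monotonicity of the pairing term, $(\z_n-\z_m,D\varphi_\epsilon(w))\ge0$, for a general Lipschitz non-decreasing $\varphi_\epsilon$ and not merely for truncations. For truncations the paper uses this implicitly. For general $\varphi_\epsilon$ I would argue via the coarea/layer-cake representation $\varphi_\epsilon(w)=\int_0^\infty\big(\chi_{\{w>\varphi_\epsilon^{-1}(\lambda)\}}-\chi_{\{w<-\varphi_\epsilon^{-1}(\lambda)\}}\big)d\lambda$. Alternatively, I would approximate $\varphi_\epsilon$ by finite sums $\sum c_i T_{k_i}(\cdot-a_i)$ with $c_i\ge0$ and invoke the truncation case $(\z,DT_k(v))=|DT_k(v)|$ from the Preliminaries. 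A minor point is that the $\epsilon\to0$ step needs $|w|^{r-1}\in L^{r'}$, which holds trivially for bounded $w$.
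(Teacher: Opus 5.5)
Your proposal is correct and follows essentially the same route as the paper. It approximates the data in $L^r(\Omega)$ and $L^1(0,T;L^r(\Omega))$, tests the difference of two approximate problems with a bounded nondecreasing version of $|s|^{r-2}s$ through Proposition \ref{Prop.Generalizada}, discards the diffusion and boundary terms, and concludes via H\"older and a Bihari--Gronwall argument that $\{u_n\}$ is Cauchy in $L^\infty(0,T;L^r(\Omega))$.

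Your $\epsilon$-regularization is actually a useful refinement. The paper's $\varphi_k(s)=r|T_k(s)|^{r-1}\sg(s)$ is not Lipschitz at $0$, which Proposition \ref{Prop.Generalizada} and the $BV$ chain rule require. The sign of the pairing term is also easier than you fear, and neither the coarea route nor the approximation by sums of truncations is needed. Bilinearity gives $(\z_n-\z_m,D(u_n-u_m))\ge |Du_n|+|Du_m|-|Du_m|-|Du_n|=0$ as measures. For nondecreasing Lipschitz $p$, the density of $(\z,Dp(w))$ with respect to $|Dp(w)|$ equals that of $(\z,Dw)$, so this sign passes to $D\varphi_\epsilon(w)$.
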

\begin{pf}
Fix $1<r<2$ and functions $u_0\in L^r(\Omega)$ and $f\in L^1(0,T;L^r(\Omega))$. Due to Theorems \ref{Teo-exist} and \ref{Teo-unic}, there exists a unique entropy solution to problem \eqref{P}. Moreover, this solution is obtained through an approximation procedure. We will show that such a solution also belongs to $C(0,T;L^r(\Omega))$.

Choosing sequences $\lbrace f_n\rbrace\subset L^\infty((0,T)\times\Omega)$ and $\lbrace u_n^0\rbrace\subset L^\infty(\Omega)\cap W^{1,1}(\Omega)$ such that $f_n\to f$ in $L^1(0,T;L^r(\Omega))$ and $u_n^0\to u_0$ in $L^r(\Omega)$, Proposition \ref{test-func} yields a  unique solution $u_n\in C([0,T];L^2(\Omega))\cap L^1_w(0,T;BV(\Omega))$ to every problem \eqref{Pn} with data $u_n^0$ and $f_n$. Let $\z_n$ denote the associated vector field.

Let us define now the auxiliary function $\omega(x,t):=\omega_{n,m}(x,t)=u_n(x,t)-u_m(x,t)$ and the bounded, non-decreasing, continuous function

\begin{equation*}
\varphi_k(s)=r|T_k(s)|^{r-1} \sg(s)\,.
\end{equation*}
Taking the test function $\varphi_k(\omega(s))$ in both problems (those corresponding to $m$ and $n$) and subtracting the resulting equations, we get
\begin{equation*}
\int_\Omega \varphi_k(\omega(s))\omega'(s)\,dx = \int_\Omega [\Div \z_n(s)-\Div \z_m(s)]\varphi_k(\omega(s))\,dx + \int_\Omega [f_n(s)-f_m(s)]\varphi_k(\omega(s))\,dx.
\end{equation*}
Defining $\Psi_k(t)=\int_0^t\varphi_k(s)\, ds$, using Proposition \ref{Prop.Generalizada} and dropping non positive terms, the previous equality becomes
\begin{equation*}
\left(\int_\Omega \Psi_k(\omega(s))\,dx\right)' = \int_\Omega \varphi_k(\omega(s))\omega'(s)\,dx \le \int_\Omega [f_n(s)-f_m(s)]\varphi_k(\omega(s))\,dx.
\end{equation*}
Now, we integrate over $(0,t)$ with $t\in(0,T)$:
\begin{equation}\label{ecu-1}
\int_\Omega \Psi_k(\omega(t))\,dx-\int_\Omega \Psi_k(\omega(0))\,dx 
\le \intt [f_n(s)-f_m(s)]\varphi_k(\omega(s))\,dx\,ds.
\end{equation}
The next step is to pass to the limit as $k\to+\infty$. On the one hand, by Young's inequality,
$$
|\Psi_k(\omega(s))| \le 2|\omega(s)|^r \le 2\left(\frac{r}{2}|\omega(s)|^2+\frac{2-r}{2}\right)\in L^1(\Omega)
$$
 hence, the Dominated Convergence Theorem leads to
\begin{equation*}
\lim_{k\to+\infty} \int_\Omega  \Psi_k(\omega(t))\,dx  =\int_\Omega|\omega(t)|^r\,dx = \| \omega(t)\|_r^r
\end{equation*}
and
\begin{equation*}
\lim_{k\to+\infty} \int_\Omega  \Psi_k(\omega(0))\,dx =\int_\Omega|\omega(0)|^r\,dx = \| \omega(0)\|_r^r.
\end{equation*}
On the other hand, $|\varphi_k(s)|=r|T_k(s)|^{r-1}\le r|s|^{r-1}\le r\left(\frac{r-1}{2}|s|^2+\frac{3-r}{2}\right)$ and so
\begin{equation*}
|[f_n(s)-f_m(s)]\varphi_k(\omega(s))| \in L^1((0,T)\times\Omega)
\end{equation*}
owing to $f_n, f_m\in L^\infty((0,T)\times\Omega)$.
Using again the Dominated Convergence Theorem, we get 
\begin{multline*}
\lim_{k\to+\infty} \intt [f_n(s)-f_m(s)]\varphi_k(\omega(s))\,dx\,ds =r \intt [f_n(s)-f_m(s)]|\omega(s)|^{r-1}\sg(\omega(s)) \,dx\,ds 
\\ \le r \int_0^t \left(\int_\Omega |f_n(s)-f_m(s)|^r\,dx\right)^{\frac{1}{r}}\left( \int_\Omega |\omega(s)|^r\,dx\right)^{\frac{r-1}{r}}\\
=r\int_0^t \|f_n(s)-f_m(s)\|_r\|\omega(s)\|_r^{r-1}\,ds
\end{multline*}
and inequality \eqref{ecu-1} becomes
\begin{equation*}
\|\omega(t)\|_r^r \le \|\omega(0)\|_r^r + r \int_0^t \|f_n(s)-f_m(s)\|_r\|\omega(s)\|_r^{r-1}\,ds\,.
\end{equation*}
Finally, applying a Gronwall type inequality due to Bihari \cite{Bi} (see \cite[Theorem 5.1]{BS}) we obtain
\begin{equation*}
\|\omega(t)\|_r \le \|\omega(0)\|_r + \|f_n-f_m\|_{L^1(0,T;L^r(\Omega))}\,;
\end{equation*}
and since $u_n^0\to u_0$ and $f_n\to f$ in $L^r(\Omega)$ and $L^1(0,T;L^r(\Omega))$, respectively, we conclude that $\{u_n\}$ is a Cauchy sequence in $L^\infty(0,T;L^r(\Omega))$. Since each $u_n\in C([0,T];L^2(\Omega))\subset C([0,T];L^r(\Omega))$, the result follows.
\end{pf}

\section{Long term decay for the homogeneous problem}

This final section analyzes the long-time behavior of solutions to the homogeneous problem. The qualitative properties and asymptotic behavior of the total variation flow have been thoroughly studied (see \cite{ACDM}) with bounded initial data; here we show that for initial data $u_0\in L^{r_0}(\Omega)$ with $r_0\in(1,2)$,  the solution  $u$ to problem \eqref{P} decays in norm over time in the $L^{r}(\Omega)$ spaces for every $r\in(1,r_0)$. This fact extends the main result of \cite{PR}.

\begin{Theorem}
Fix  $1< r_0<2$ and assume $u_0\in L^{r_0}(\Omega)$ and $f\equiv 0$. Then, for every $1< r<r_0$, the solution $u$ to problem \eqref{P} satisfies
\begin{equation*}
\|u(t)\|_{L^r(\Omega)}\le C\frac{\|u_0\|_{L^{r_0}(\Omega)}^{h_0}}{t^{h_1}} \quad\text{ for every }t>0.
\end{equation*}
with $h_0=\frac{r_0(N-r)}{r(N-r_0)}$, $h_1=\frac{N(r_0-r)}{r(N-r_0)}$ and $C=\left(\frac{N(r_0-r)}{N-r_0}\right)^{\frac{N(r_0-r)}{r(N-r_0)}}$.
\end{Theorem}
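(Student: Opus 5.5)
We prove the decay estimate first for the approximating solutions and then pass to the limit, as in Theorem \ref{Teo-reg}. Take $u_n^0\in L^\infty(\Omega)\cap W^{1,1}(\Omega)$ with $u_n^0\to u_0$ in $L^{r_0}(\Omega)$, and let $u_n$ be the corresponding solutions with $f\equiv0$ given by Propositions \ref{test-func} and \ref{test-func0}. These are bounded, $u_n'\in L^2$, and they carry the vector fields $\z_n$. By Theorem \ref{Teo-reg}, $u_n\to u$ in $C([0,T];L^{r})$ for every $r\le r_0$. So it suffices to prove the bound for $u_n$ with constants independent of $n$, and then let $n\to\infty$ for fixed $t$.

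\textbf{Step 1: energy inequality in $L^q$.} Fix $q\in(1,r_0]$. Use $\varphi(u_n(t))$ with $\varphi(s)=q|s|^{q-1}\sg(s)$ as test function; truncate as $\varphi_k$ first if needed, although $u_n$ is bounded. Proposition \ref{Prop.Generalizada} gives
\[
\Big(\int_\Omega |u_n(t)|^q\,dx\Big)'=-\int_\Omega(\z_n(t),D\varphi(u_n(t)))+\intp\varphi(u_n(t))[\z_n(t),\nu]\,d\h .
\]
By (ii)--(iii) of Definition \ref{def-reg}, the pairing equals $|D\varphi(u_n)|$ and the boundary term equals $-|\varphi(u_n)|$; for a nondecreasing Lipschitz function of $u_n$ one checks this via the chain rule for the pairing, or by approximation. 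Hence $t\mapsto\|u_n(t)\|_q$ is nonincreasing. Moreover,
\[
\Big(\|u_n(t)\|_q^q\Big)'+\frac{q}{\,\cdot\,}\Big(\int_\Omega|D|u_n|^{q}|+\intp|u_n|^q\Big)\le0 ,
\]
up to the factor relating $|D(|u|^{q-1}\sg u)|$ and $|D|u|^q|$. More precisely, I would write $|u|^{q-1}\sg u = w$ and apply Sobolev in $BV$ to $w$: $\|w\|_{N/(N-1)}\le C(\int|Dw|+\int_{\partial\Omega}|w|)$. This yields
\[
\frac{d}{dt}\|u_n\|_q^q+c\,\|u_n\|_{\frac{N(q-1)}{N-1}}^{\,q-1}\le0 .
\]

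\textbf{Step 2: interpolation and ODE.} Choose $q=r$. I would interpolate $\|u\|_r$ between $\|u\|_{N(r-1)/(N-1)}$ and $\|u\|_{r_0}$ (the latter bounded by $\|u_0\|_{r_0}$ by monotonicity). If $N(r-1)/(N-1)<r$, I would instead use a lower exponent together with $L^{r_0}$ as the upper one. This gives $\|u\|_{s}^{r-1}\ge c\|u\|_r^{\gamma}\|u_0\|_{r_0}^{-\beta}$, and hence a differential inequality $y'\le -c\,y^{\gamma/r}\|u_0\|_{r_0}^{-\beta}$ for $y=\|u_n\|_r^r$ with $\gamma/r>1$. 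Integrating gives $y(t)\le C(\|u_0\|^{\beta}_{r_0}/t)^{r/(\gamma-r)}$, which should reproduce $h_1=\frac{N(r_0-r)}{r(N-r_0)}$ and $h_0$. If this exact bookkeeping fails, a fallback is to apply Step 1 with $q=r_0$ and the Sobolev exponent $r_0^*$-type gain, and then Hölder between $L^r$ and that space.

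\textbf{Main obstacle.} The hardest part is justifying $(\z_n,D\varphi(u_n))=|D\varphi(u_n)|$ and the boundary identity for the nonlinear function $\varphi$. I would prove these by approximating $\varphi$ with step functions, splitting via level sets $T_{k}$ and $G_k$ as in the truncation remark of Section 2 and using the coarea formula. A secondary difficulty is matching the exponents to the exact constant $C$ in the statement. To handle it, I would take the exact constant of the Bernoulli-type ODE and use $\Omega$-independent Sobolev constants, which suggests the interpolation should use $\int|Du|$ only through the isoperimetric inequality.
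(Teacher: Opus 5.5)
Your argument is correct in substance, but it takes a genuinely different route from the paper.

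\textbf{The two routes.} The paper never works with the energy of the $1$--Laplacian flow. For $u_0\in L^2(\Omega)\cap W^{1,1}(\Omega)$ it uses that $u(t)$ is the $L^2$-limit of solutions $u_p$ of the $p$--Laplacian flow \cite{SW}. It then imports the decay estimate \cite[Theorem 1.2]{Pz} for $1<p<\frac{2N}{N+r_0}$ and lets $p\to1$ in $C_p,h_{0,p},h_{1,p}$. Only the final density step, via the $L^{r_0}$-Cauchy property from the proof of Theorem \ref{Teo-reg}, coincides with yours. You instead run a Porzio-type argument directly on the total variation flow: test with $r|u_n|^{r-1}\sg(u_n)$, apply the isoperimetric inequality to $w=|u_n|^{r-1}\sg(u_n)$ extended by zero, interpolate against the nonincreasing $L^{r_0}$ norm, and integrate a Bernoulli inequality. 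The paper's route is short but rests on two external results and on the behaviour of Porzio's constants as $p\to1$. Yours is self-contained and shows where $h_0,h_1,C$ come from.

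\textbf{The bookkeeping closes exactly.} Since $r<2\le N$, you always have $s=\frac{N(r-1)}{N-1}<r$, so only your first interpolation is needed. With $\frac1r=\frac\theta s+\frac{1-\theta}{r_0}$ and $y=\|u_n\|_r^r$ you get
\[
y'\le-\frac{r}{S_N}\,\|u_n^0\|_{r_0}^{-\beta}\,y^{\gamma},\qquad \gamma=\frac{r-1}{r\theta}=1+\frac{N-r_0}{N(r_0-r)},\qquad \beta=\frac{(r-1)(1-\theta)}{\theta}.
\]
Moreover $\frac{1}{r(\gamma-1)}=h_1$ and $\frac{\beta}{r(\gamma-1)}=1+h_1=h_0$, and $(\gamma-1)^{-h_1}$ is precisely the constant $C$ of the statement. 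Integrating therefore gives
\[
\|u_n(t)\|_r\le\Big(\frac{S_N}{r}\Big)^{h_1}C\,\frac{\|u_n^0\|_{r_0}^{h_0}}{t^{h_1}} .
\]
Take the sharp constant $S_N=(N\omega_N^{1/N})^{-1}$, where $\omega_N$ is the volume of the unit ball. Then $S_N<1<r$, so the extra factor is below $1$ and the stated bound follows. A non-sharp Sobolev constant larger than $r$ would not suffice.

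\textbf{A correction to Step 1.} The function $\varphi(s)=r|s|^{r-1}\sg(s)$ fails to be Lipschitz at $s=0$, not at infinity. Truncating at level $k$ handles boundedness but does not make Proposition \ref{Prop.Generalizada} applicable. Regularize near zero instead, e.g.\ $\varphi_\epsilon(s)=r\,\sg(s)\big(\max\{|s|,\epsilon\}^{r-1}-\epsilon^{r-1}\big)$ composed with $T_M$, $M\ge\|u_n\|_\infty$. For such nondecreasing Lipschitz $\varphi_\epsilon$, the identity $(\z_n,D\varphi_\epsilon(u_n))=|D\varphi_\epsilon(u_n)|$ is a standard consequence of $(\z_n,Du_n)=|Du_n|$, so no coarea argument is needed. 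The boundary term equals $-|\varphi_\epsilon(u_n)|$ directly from $[\z_n,\nu]\in\sg(-u_n)$, so it is not an obstacle either. Then let $\epsilon\to0$ in the time-integrated inequality, using lower semicontinuity of $v\mapsto\int_\Omega|Dv|+\int_{\partial\Omega}|v|\,d\h$ and Fatou. This gives your Step 1 and shows a posteriori that $w(t)\in BV(\Omega)$.
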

\begin{pf}
First, assume that $u$ is the solution to problem \eqref{P} with initial datum $u_0\in L^2(\Omega)\cap W^{1,1}(\Omega)$. In this first step, we argue as in \cite[Theorem 3.3]{PR}. We begin by applying  \cite[Theorem 4.1]{SW} which provides us with a unique solution  $u \in C([0,+\infty);L^2(\Omega))$. As mentioned, this solution is found as a limit of solutions to parabolic problems involving the $p$--Laplacian. More precisely: there exists a family $\{u_p\}_p$, where $1<p<2$, such that each $u_p$ 
is a solution to problem
\begin{equation}\label{asbe:0}
\left\{\begin{array}{rcll}
 u^\prime_p  - \Delta_pu_p & =&0\,,&\hbox{in } (0,+\infty)\times\Omega\,; \\[3mm]
 u_p &=& 0\,,  & \hbox{on } (0,+\infty)\times\partial \Omega\,;\\[3mm]
 u_p(x,0)&=&u_0(x)& \hbox{in }\Omega\,;
\end{array}\right.
\end{equation}
(so that $u_p\in  C([0,+\infty); L^2(\Omega))\subset C([0,+\infty); L^r(\Omega))$)
and this family satisfies, for every $t>0$,  
\[u_p(t)\to u(t)\qquad \hbox{strongly in } L^2(\Omega)\,.\]
It follows from \cite[Theorem 1.2]{Pz} that if $1<p<\frac{2N}{N+r_0}$ and $1<r<r_0$, then 
\begin{equation}\label{asbe:1}
\| u_p(t)\|_r\le C_p \frac{\|u_0\|_{r_0}^{h_{0,p}}}{t^{h_{1,p}}}
\end{equation}
where $h_{0,p}$, $h_{1,p}$ and $C_p$ depend only on the parameters and satisfy
\begin{equation*}
\lim_{p\to1}h_{0,p}=h_0:=\frac{r_0(N-r)}{r(N-r_0)}\,,\,\,\lim_{p\to1} h_{1,p}=h_1:=\frac{N(r_0-r)}{r(N-r_0)}\,,\,\,
\lim_{p\to1}C_p=C:=\left(\frac{N(r_0-r)}{N-r_0}\right)^{\frac{N(r_0-r)}{r(N-r_0)}}\,.
\end{equation*}
Therefore, letting $p$ tend to 1 in \eqref{asbe:1}, we get
\begin{equation}\label{asbe:2}
\| u(t)\|_r\le C \frac{\|u_0\|_{r_0}^{h_{0}}}{t^{h_{1}}}.
\end{equation}

For the general case, let $u_0\in L^{r_0}(\Omega)$. Then Theorem \ref{Teo-exist} and Theorem \ref{Teo-unic} guarantee that there exists a sequence of approximate solutions $\{u_n\}\subset C([0,+\infty);L^2(\Omega))$ such that
\[u_n^0\to u_0 \qquad \hbox{strongly in } L^r(\Omega)\,,\]
where $u_n^0\in L^\infty(\Omega)\cap H^1(\Omega)$ denotes the initial datum of $u_n$, and
\begin{equation*}
u_n(t)\to u(t)\qquad \hbox{strongly in }L^1(\Omega)\,,
\end{equation*}
for every $t>0$. 
Actually, we have deduced in the proof of Theorem \ref{Teo-reg} that, for every $t>0$,
\[u_n(t)\to u(t)\qquad \hbox{strongly in } L^{r}(\Omega)\,.\] 
On the other hand, since $u_n\in C([0,+\infty);L^2(\Omega))$, for every $t>0$, we obtain
\begin{equation}\label{asbe:3}
\| u_n(t)\|_r\le C \frac{\|u_n^0\|_{r_0}^{h_{0}}}{t^{h_{1}}}
\end{equation}
with the same constants as above.
Hence, passing to the limit as $n\to+\infty$ in \eqref{asbe:3}, the desired result is obtained.
\end{pf}

\section*{Funding}
The authors are partially supported by the grant PID2022-136589NB-I00 funded by MICIU/AEI/10.13039/501100011033 and by “ERDF A way of making Europe”.
M. Latorre also acknowledges partial support by Project PID2024-160967NB-I00 funded by AEI (Spain) and FEDER.
S. Segura de Le\'on also acknowledges partial support of Grant RED2022-134784-T funded by MCIN/AEI/10.13039/501100\-011033.

\end{document}